\newtheorem{Lemma}{Lemma}[section]
\newtheorem{remark}[Lemma]{Remark}
\newtheorem{remarks}[Lemma]{Remarks}
\newtheorem{lemma}[Lemma]{Lemma}
\newtheorem{proposition}[Lemma]{Proposition}
\newtheorem{definition}[Lemma]{Definition}
\newtheorem{example}[Lemma]{Example}
\newcommand{\Cal}[1]{{\mathcal #1}}
\newcommand{\Hom}{\operatorname{Hom}}
\newcommand{\Triv}{\operatorname{Triv}}
\newcommand{\Mod}{\operatorname{Mod-\!}}
\DeclareMathOperator{\op}{op}
\newcommand{\cmat}{\left(\begin{array}}
\newcommand{\fmat}{\end{array}\right)}
\newcommand{\pre}{\mathbf{Preord}}
\begin{document}
   \title{Pretorsion theories, stable category and preordered sets}
  \author[Alberto Facchini]{Alberto Facchini}
\address{Dipartimento di Matematica ``Tullio Levi-Civita'', Universit\`a di Padova, 35121 Padova, Italy}
 \email{facchini@math.unipd.it}
\thanks{The first author was partially supported by Dipartimento di Matematica ``Tullio Levi-Civita'' of Universit\`a di Padova (Project BIRD163492/16 ``Categorical homological methods in the study of algebraic structures'' and Research program DOR1828909 ``Anelli e categorie di moduli''). The second author was partially supported by GNSAGA of Istituto Nazionale di Alta Matematica and by Dipartimento di Matematica ``Tullio Levi-Civita'' of Universit\`a di Padova (Research program DOR1828909 ``Anelli e categorie di moduli'').}  
 \author[Carmelo Finocchiaro]{Carmelo Finocchiaro}
\address{Dipartimento di Matematica e Informatica, Universit\`a di Catania, 95125 Catania, Italy and Dipartimento di Matematica ``Tullio Levi-Civita'', Universit\`a di Padova, 35121 Padova, Italy}
 \email{carmelo@math.unipd.it}
   \keywords{Preorder, Torsion theory, Stable category. \\ \protect \indent 2010 {\it Mathematics Subject Classification.} Primary 06A06, 06A11. Secondary 08A99,   18B99.
} 
      \begin{abstract} We show that in the category of preordered sets, there is a natural notion of pretorsion theory, in which the partially ordered sets are the torsion-free objects and the sets endowed with an equivalence relation are the torsion objects. Correspondingly, it is possible to construct a stable category factoring out the objects that are both torsion and torsion-free.\end{abstract}

    \maketitle

\section{Introduction}

Every preorder can be obtained from an equivalence relation and a partial order. In this paper, we show that this can be interpreted in the language of a suitably defined (pre)torsion theory. More precisely, we consider the category $\pre$ of all non-empty preordered sets $(A,\rho)$, and its full subcategories {\bf ParOrd} and {\bf Equiv}, whose objects are all partially ordered sets $(A,\le)$ and all sets $(A,\sim)$, where $\sim$ is an equivalence relation on $A$, respectively. If we view {\bf ParOrd} and {\bf Equiv} as classes of objects, i.e., as subclasses of the class of all objects of $\pre$, we can call {\em trivial objects} of {\bf Preord} the objects of $\pre$ that are both in {\bf ParOrd} and {\bf Equiv}. Trivial objects are the objects of $\pre$ of the form $(A,=)$, where $=$ is the equality relation on $A$. We denote the full subcategory of {\bf Preord}  whose objects are all trivial objects $(A,=)$ by {\bf Triv}. Call {\em trivial morphisms} the morphisms in $\pre$ that factors through a trivial object. Factoring out the category $\pre$ modulo the trivial objects and the trivial morphisms, we get a category \underline{{\bf Preord}}, called the {\em stable category} of {\bf Preord}. The category \underline{{\bf Preord}} turns out to be a pointed category, that is, a category with a zero object. Hence, in \underline{{\bf Preord}}, we have the notions of kernel and cokernel, which have their natural counterpart in $\pre$. We call these natural counterparts prekernels and precokernels. This allows us to define {\em short preexact sequences}: if $f\colon A\to B$ and $g\colon B\to C$ are two morphisms in $\pre$, we say that $\xymatrix{
	A \ar[r]^f &  B \ar[r]^g &  C}$ is a \emph{short preexact sequence} in $\pre$ if $f$ is a prekernel of $g$ and $g$ is a precokernel of $f$. For every object $(A,\rho)$ in $\pre$, let $\sim$ be an equivalence relation on $A$ contained in $ \rho$. Then $$\xymatrix{
	(A,\sim) \ar[r]^k &  (A,\rho) \ar[r]^{\pi} &  (A/\!\sim,\rho)}$$ is a short preexact sequence in $\pre$. Here $k$ is the identity mapping and $\pi$ is the canonical projection.
In particular, for example, let $(A,\rho)$ be any preordered set and  $\simeq_\rho$ denote the equivalence relation on $A$ defined, for every $a,b\in A$, by $a\simeq_\rho b$ if $a\rho b $ and $b\rho a$. Then 
$$\xymatrix{
	(A,\simeq_\rho) \ar[r]^k &  (A,\rho) \ar[r]^\pi &  (A/\!\simeq_\rho,\leq_\rho)}$$
is a short preexact sequence in $\pre$, $(A,\simeq_\rho)\in {}${\bf Equiv} and  $(A/\!\simeq_\rho,\leq_\rho)\in{}${\bf ParOrd}. It is now easy to generalize what we have seen above and define  pretorsion theories for arbitrary categories $\Cal C$. Our definition is as follows. Let $\Cal C$ be an arbitrary category. A {\em pretorsion theory} $(\Cal T,\Cal F)$ for $\Cal C$ consists of two non-empty classes $\Cal T,\Cal F$ of objects of $\Cal C$, closed under isomorphism, satisfying the following two conditions. Set $\Cal Z:=\Cal T\cap\Cal F$, let $\Triv_{\Cal Z}(X,Y)$ be the set of  all morphisms $X\to Y$ in $\Cal C$ that factors though an object of $\Cal Z$, and call {\em $\Cal Z$-preexact} any sequence preexact relatively to $\Cal Z$. The two conditions necessary to define our pretorsion theories 
  $(\Cal T,\Cal F)$ are:
  
  (1) For every object $B$ of $\Cal C$ there is a short $\Cal Z$-preexact sequence $$\xymatrix{
	A \ar[r]^f &  B \ar[r]^g &  C}$$ with $A\in\Cal T$ and $C\in\Cal F$.
	
	(2) $\Hom_{\Cal C}(T,F)=\Triv_{\Cal Z}(T, F)$  for every object $T\in\Cal T$, $F\in\Cal F$. 
	
	\noindent The pair $(${\bf Equiv}$, ${\bf PreOrd}$)$ turns out to be a pretorsion theory in the category $\pre$.

\section{Preorders, partial orders and equivalence relations}
A {\em preorder} on a set $A$ is a relation on $A$ that is reflexive and transitive. We will denote by {\bf Preord} the category of all non-empty preordered sets. Its objects are the pairs $(A,\rho)$, where $A$ is a non-empty set and $\rho$ is a preorder on $A$. The morphisms $f\colon (A,\rho)\to (A',\rho')$ in {\bf Preord} are the mappings $f$ of $A$ into $A'$ such that $a\rho b$ implies $f(a)\rho' f(b)$ for all $a,b\in A$. As usual, when there is no danger of confusion or when the preorder is clear from the context, we will denote the preordered set $(A,\rho)$ simply by $A$.

\begin{remark} {\rm We fix some notations useful in the sequel. 
The category {\bf Preord} has arbitrary coproducts and arbitrary products. Given any family $\{\,(A_i,\rho_i)\mid i\in I\,\}$ of preordered sets,  then its disjoint union $A:=\coprod_{i\in I}A_i (:=\{(a,i)\mid i\in I,a\in A_i \})$ is naturally endowed with the \emph{coproduct preorder} $\rho$  defined, for every $(a,i), (b,j)\in A$, by	
$
(a,i)\rho (b,j)$ if $ i=j$ and $a\rho_i b.
$
Clearly, $(A,\rho)$ is the coproduct of the family. The product in {\bf Preord} of the family is its cartesian product with the componentwise preorder. }
\end{remark}

The main examples of preordered sets $(A,\rho)$ are those in which the preorder $\rho$ is a partial order (i.e., $\rho$ is antisymmetric) or an equivalence relation (i.e., $\rho$ is symmetric). The full subcategories of {\bf Preord} whose objects are all preordered sets $(A,\rho)$ with $\rho$ a partial order (an equivalence relation) will be denoted by {\bf ParOrd} ({\bf Equiv}, respectively).

In our first proposition, we describe preorders on a set $A$. They are obtained from an equivalence relation $\sim$ on $A$ and a partial order on the quotient set $A/\!\!\sim$.

\begin{proposition}\label{preorder} Let $A$ be a set. There is a one-to-one correspondence between the set of all preorders $\rho$ on $A$ and the set of all pairs $(\sim,\le)$, where $\sim$ is an equivalence relation on $A$ and $\le$ is a partial order on the quotient set $A/\!\!\sim$. The correspondence associates to every preorder $\rho$ on $A$ the pair $(\simeq_\rho,\le_\rho)$, where $\simeq_\rho$ is the equivalence relation defined, for every $a,b\in A$, by $a\simeq_\rho b$ if $a\rho b$ and $b\rho a$, and $\le_\rho$ is the partial order on $A/\!\!\simeq_\rho$ defined, for every $a,b\in A$, by $[a]_{\simeq_\rho}\le[ b]_{\simeq_\rho}$ if $a\rho b$. Conversely, for any pair $(\sim,\le)$ with $\sim$ an equivalence relation on $A$ and $\le$ a partial order on  $A/\!\!\sim$, the corresponding preorder $\rho_{(\sim,\le)}$ on $A$ is defined, for every $a,b\in A$, by $a\rho_{(\sim,\le)} b$ if $[a]_{\sim}\le[b]_{\sim}$.\end{proposition}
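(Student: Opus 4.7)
The plan is to verify that the two constructions $\rho \mapsto (\simeq_\rho, \le_\rho)$ and $(\sim, \le)\mapsto \rho_{(\sim,\le)}$ are well defined and mutually inverse. I would organize the argument in four short steps.

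First I would check that, starting from a preorder $\rho$, the relation $\simeq_\rho$ is indeed an equivalence relation: reflexivity follows from reflexivity of $\rho$, symmetry is built into the definition, and transitivity follows from transitivity of $\rho$ applied to the two inequalities defining $\simeq_\rho$. Next I would verify that $\le_\rho$ is well defined on $A/\!\simeq_\rho$: if $a\simeq_\rho a'$ and $b\simeq_\rho b'$, then $a\rho b$ forces $a'\rho a\rho b \rho b'$, so $a'\rho b'$. Reflexivity and transitivity of $\le_\rho$ are inherited from $\rho$, and antisymmetry is immediate: $[a]\le_\rho[b]$ and $[b]\le_\rho[a]$ give $a\rho b$ and $b\rho a$, hence $a\simeq_\rho b$, i.e.\ $[a]=[b]$.

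Second, I would check the reverse direction: given $(\sim,\le)$, the relation $\rho_{(\sim,\le)}$ defined by $a\rho_{(\sim,\le)} b$ iff $[a]_\sim \le [b]_\sim$ is reflexive and transitive because $\le$ is, so it is a preorder on $A$.

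Third, I would verify that the two constructions are inverse to each other. Starting from $\rho$, for any $a,b\in A$ one has $a\rho_{(\simeq_\rho,\le_\rho)} b$ iff $[a]_{\simeq_\rho}\le_\rho [b]_{\simeq_\rho}$ iff $a\rho b$, so $\rho_{(\simeq_\rho,\le_\rho)}=\rho$. Conversely, starting from a pair $(\sim,\le)$, set $\rho:=\rho_{(\sim,\le)}$; then $a\simeq_\rho b$ means $[a]_\sim\le [b]_\sim$ and $[b]_\sim\le [a]_\sim$, which by antisymmetry of $\le$ is equivalent to $[a]_\sim=[b]_\sim$, i.e.\ $a\sim b$. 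Hence $\simeq_\rho\,=\,\sim$, and then $A/\!\simeq_\rho = A/\!\sim$, while $[a]\le_\rho [b]$ iff $a\rho b$ iff $[a]_\sim \le [b]_\sim$, so $\le_\rho\,=\,\le$.

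Nothing here is deep; the only point that requires a bit of care is the well-definedness of $\le_\rho$ on equivalence classes and the antisymmetry check, since these are exactly the places where the passage to the quotient $A/\!\simeq_\rho$ does its work. Once that is in place, everything else is a straightforward unfolding of definitions.
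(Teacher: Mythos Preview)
Your proof is correct and complete. Note that the paper itself does not supply a proof of this proposition: it is stated and then immediately followed by the discussion of trivial objects, so the authors evidently regard it as elementary or well known. Your four-step verification (equivalence relation, well-definedness and antisymmetry of $\le_\rho$, preorder structure of $\rho_{(\sim,\le)}$, and mutual inversity) is exactly the routine check one would expect, and nothing is missing.
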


The objects of  {\bf Preord} that are objects in both the full subcategories    {\bf ParOrd} and {\bf Equiv} are the objects of the form $(A,=)$, where $=$ denotes the equality relation on $A$. We will call them the {\em trivial objects} of {\bf Preord}. The full subcategory of {\bf Preord}  whose objects are all trivial objects $(A,=)$ will be denoted by {\bf Triv}. It is a category isomorphic to the category of all non-empty sets.

A morphism $f\colon (A,\rho)\to (A',\rho')$ in {\bf Preord} is {\em trival} if it factors through a trivial object, that is, if there exist a trivial object $(B,=)$ and morphisms $g\colon (A,\rho)\to (B,=)$ and $h\colon (B,=)\to (A',\rho')$ in {\bf Preord} with $f=hg$.

\begin{Lemma} Let $(A,\rho)$ and $(A',\rho')$ be objects in the category {\bf Preord} and $f\colon A\to A'$ be a morphism. Then $f\colon (A,\rho)\to (A',\rho')$ is a trivial morphism in {\bf Preord} if and only if $a\rho b$ implies $f(a)=f(b)$ for all $a,b\in A$. \end{Lemma}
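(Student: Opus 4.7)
The plan is to unpack the definition of triviality in both directions; the statement follows purely formally from two observations about the equality relation on a trivial object $(B,=)$: (a) a map $g\colon (A,\rho)\to (B,=)$ is a morphism in $\pre$ if and only if $a\rho b$ forces $g(a)=g(b)$, since the only way to respect $=$ is to land in equal values; (b) any map $h\colon (B,=)\to (A',\rho')$ is automatically a morphism, because $b=b'$ trivially implies $h(b)\rho'h(b')$ by reflexivity of $\rho'$.

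For the ``only if'' direction, assume $f=hg$ with $g\colon (A,\rho)\to (B,=)$ and $h\colon (B,=)\to (A',\rho')$ morphisms in $\pre$. If $a\rho b$, then $g(a)\,{=}\,g(b)$ by observation (a) applied to $g$, and hence $f(a)=h(g(a))=h(g(b))=f(b)$.

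For the ``if'' direction, suppose $a\rho b$ implies $f(a)=f(b)$. The slickest choice of a factorisation is to take the trivial object $(A',=)$ itself, define $g\colon (A,\rho)\to (A',=)$ to be $f$ as a set map, and let $h\colon (A',=)\to (A',\rho')$ be the identity. Then $g$ is a morphism in $\pre$ precisely by the hypothesis (condition (a) above), $h$ is a morphism by reflexivity of $\rho'$ (condition (b) above), and clearly $f=hg$. Since $(A',=)$ is a non-empty trivial object of $\pre$, this exhibits $f$ as a trivial morphism.

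There is no real obstacle here; the whole content is that $(-,=)$ is both the ``universal collapsing target'' and automatically maps into any $(A',\rho')$. One could alternatively factor through the image $f(A)$ with equality, or through the quotient $A/\!\sim$ where $a\sim b\iff f(a)=f(b)$, but the direct factorisation through $(A',=)$ makes the argument a one-line verification.
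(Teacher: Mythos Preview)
Your proof is correct. Both you and the paper argue by directly unpacking the definition of a trivial morphism; the only difference is the choice of factorisation in the ``if'' direction. The paper factors $f$ through the quotient $(A/\!\!\sim_f,=)$ via the canonical projection $\pi$ and the induced injection $\overline{f}$, while you factor through $(A',=)$ using $f$ itself (regarded as a set map) followed by the identity. Your choice is slightly more economical, since it avoids constructing the quotient; the paper's choice yields a canonical intermediate object with $\overline{f}$ injective, but that feature is not used here. You also spell out the ``only if'' direction, which the paper simply declares trivial.
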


\begin{proof}Suppose that $a\rho b$ implies $f(a)=f(b)$ for all $a,b\in A$ and consider the canonical factorization $$
\xymatrix{
(A,\rho)\ar[r]^{\pi\ \ }&(A/\!\!\sim_f,=)\ar[r]^{\ \ \overline{f}}&(A',\rho')}
$$ of $f$, where $\sim_f$ is the equivalence relation on $A$ defined, for all $a,b\in A$, by $a\sim_f b$ if $f(a)=f(b)$, $\pi\colon A\to A/\!\!\sim_f$ is the canonical projection and $\overline{f}\colon  A/\!\!\sim_f\to A'$ is the injective mapping induced by $f$. Then
$f=\overline{f}\pi$, and $\pi,\overline{f}$ are morphism  in the category {\bf Preord}.

The converse is trivial. \end{proof}

We will denote by $\Hom(A,A')$ the set of all morphisms $A\to A'$ in the category {\bf Preord}, and by $\Triv(A,A')$ the set of all trivial morphisms $A\to A'$. 

\smallskip

In order to justify the terminology that follows, it is now convenient to introduce some notions of Topology. Recall that a topological space is \emph{Alexandroff-discrete} provided that the intersection of any family of open subsets is an open subset. The full subcategory of {\bf Top} whose objects are all Alexandroff-discrete spaces is isomorphic to the category {\bf Preord}. The category isomorphism associates, to any preordered set $(X,\rho)$, the topological space $(X,\tau_\rho)$, whose open sets are the subsets $A$ of $X$ such that, if $a\in A$, $x\in X$ and $x\rho a$, then $x\in A$. It is immediately seen that $(X,\tau_\rho)$ is an Alexandroff-discrete space. Moreover, for any $x,y\in X$, $x\rho y$ if and only if $y$ belongs to the closure $\overline{\{x\}}$ of $x$ with respect to the topology $\tau_\rho$. A mapping $X\to Y$ is a morphism $(X,\rho)\to (Y,\sigma)$ in {\bf Preord} if and only if it is a continuous mapping $(X,\tau_\rho)\to (Y,\tau_\sigma)$. Conversely, let $(X,\tau)$ be any Alexandrov-discrete space. The associated preorder  $\leq_{\tau}$ on $X$ is defined, for every, $x,y\in X$, by $x\leq_{\tau}y$ if $\overline{\{y\}}\subseteq \overline{\{x\}}$. 

Now we define a suitable quotient category of the category {\bf Preord}, which we will call the {\em stable category} and denote by \underline{{\bf Preord}}. As far as quotient categories are concerned, we follow \cite[pp.~51--52]{ML}. For every object $(A,\rho)$ of {\bf Preord}, we will say that a subset $B$ of $A$ is a {\em clopen} subset if, for every $a\in A\setminus B$ and $b\in B$, $a\centernot{\rho\,} b$ and $b\centernot{\rho\,}  a$.

\begin{remarks}\label{clopen-remark}{\rm 
(1) Let $(A,\rho)$ be a preordered set and let $B\subseteq A$. By definition, $B$ is a clopen subset, according to the definition just given, if and only if $B$ is a clopen subset in the Alexandroff-discrete space $(A,\tau_\rho)$.

(2) Let $(A_i,\rho_i),\ i \in I $, be preordered sets and let $(\coprod_{i\in I}A_i, \rho)$ be their coproduct in $\pre$, where $\rho$ is the coproduct preorder. Fix an index $i\in I$. By definition, if $j\in I\setminus\{i\}$, $a\in A_i$ and $b\in A_j$, then $(a,i)\centernot{\rho\,}(b,j)$ and $(b,j)\centernot{\rho\,}(a,i)$. Hence the canonical image $A_i\times \{i\}$ of $A_i$ in the coproduct  is a clopen subset of $\coprod_{i\in I}A_i$. }
\end{remarks}

\begin{definition} {\rm 
Let $A$ be a preordered set. We say that $A$ is \emph{indecomposable in $\pre$} if $A$ is not isomorphic to $B\coprod C$ in $\pre$, for any (non-empty) preordered sets $B,C$. (Recall that we have defined $\pre$ as the category whose objects are all non-emty preordered sets.)}
\end{definition}

From Remark~\ref{clopen-remark}(2),
we get that:

\begin{lemma}\label{indecomposable-connected}
The following conditions are equivalent for a preordered set $(A,\rho)$:
\begin{enumerate}
	\item $(A,\rho)$ is indecomposable. 
	\item The Alexandroff-discrete space $(A,\tau_\rho)$ is connected, that is, its clopen subsets are only $\emptyset$ and $A$. 
\end{enumerate}
\end{lemma}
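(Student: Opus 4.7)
The plan is to prove the equivalence in both directions by contrapositive, using the two parts of Remark~\ref{clopen-remark} as the bridge between coproduct decompositions in $\pre$ and clopen subsets in the Alexandroff topology.

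For (2)$\Rightarrow$(1), I would argue as follows. Suppose $(A,\rho)$ is decomposable, so $(A,\rho)\cong (B,\sigma)\coprod(C,\tau)$ in $\pre$ with $B,C$ non-empty. By Remark~\ref{clopen-remark}(2), the canonical image of $B$ is a proper non-empty clopen subset of the coproduct; transporting along the isomorphism yields a proper non-empty clopen subset of $(A,\rho)$ in the preorder sense. By Remark~\ref{clopen-remark}(1), this is the same as a proper non-empty clopen subset of $(A,\tau_\rho)$, so the Alexandroff-discrete space is disconnected.

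For (1)$\Rightarrow$(2), again by contrapositive, assume $(A,\tau_\rho)$ is disconnected. Then there exists a clopen subset $B$ of $(A,\tau_\rho)$ with $\emptyset\neq B\neq A$. By Remark~\ref{clopen-remark}(1), $B$ is clopen in the preorder sense, so for every $a\in A\setminus B$ and $b\in B$ one has $a\,\centernot{\rho\,}b$ and $b\,\centernot{\rho\,}a$. Setting $C:=A\setminus B$, which is non-empty, I claim that the obvious bijection realizes $(A,\rho)$ as the coproduct of $(B,\rho|_B)$ and $(C,\rho|_C)$ in $\pre$. Indeed, by the clopen condition no element of $B$ is $\rho$-related (in either direction) to any element of $C$, so the preorder $\rho$ coincides with the coproduct preorder on $B\coprod C$ described in Remark~1.1. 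Hence $(A,\rho)$ is decomposable.

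This argument is essentially a direct unpacking of the definitions, and its only real content sits in the second direction: verifying that a clopen splitting of the underlying set induces a coproduct decomposition in $\pre$. That verification is immediate once one uses the defining property of a clopen subset to rule out any $\rho$-relation between $B$ and its complement, which is precisely what makes the componentwise preorder on the disjoint union recover $\rho$. No real obstacle is expected, as the two halves of Remark~\ref{clopen-remark} do essentially all the work.
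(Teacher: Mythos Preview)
Your proposal is correct and follows exactly the approach the paper intends: the paper provides no explicit proof of this lemma, simply introducing it with ``From Remark~\ref{clopen-remark}(2), we get that,'' so your argument is a faithful (and more carefully spelled out) unpacking of that remark, supplying in particular the converse direction that a proper clopen subset yields a coproduct splitting.
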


For any preordered set $(A,\rho)$, let $\equiv_\rho$ be the equivalence relation generated by $\rho$, that is, the transitive closure of the relation $\rho\rho^o$ (i.e., $x\equiv_\rho y$ if and only if 
 if there exist elements $x_0=x,x_1,\ldots, x_n=y\in A$ such that, for every $i=1,2,\dots n$, either $x_{i-1}\rho x_{i}$ or $x_{i}\rho x_{i-1}$.) 

\begin{lemma}\label{equivalence-classes}
Let $(A,\rho)$ be a preordered set. 
\begin{enumerate} 
	\item  If $B\subseteq A$ is clopen and $a\in B$, then $[a]_{\equiv_\rho}\subseteq B$. 
		\item For any $a\in A$, the equivalence class $[a]_{\equiv_\rho}$ is clopen and connected.
	\item Every clopen subset of $A$ is a union of equivalence classes  modulo $\equiv_\rho$. 
	\item The connected components of $A$ are exactly the equivalence classes of $A$ modulo $\equiv_\rho$. 
\end{enumerate}
\end{lemma}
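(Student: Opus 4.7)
My plan is to handle the four parts in the order (1), (3), (2), (4): part (1) is the core technical statement, (3) then falls out immediately, (2) reduces to (1) applied to the restricted preorder, and (4) is a formal topological consequence.

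For (1), the natural approach is induction on the length $n$ of a chain $x_0=a,x_1,\ldots,x_n=b$ witnessing $a\equiv_\rho b$. The inductive step is essentially tautological from the clopen definition: if $x_{i-1}\in B$ and either $x_{i-1}\rho x_i$ or $x_i\rho x_{i-1}$ holds, then $x_i$ cannot lie in $A\setminus B$, so $x_i\in B$. Part (3) is then automatic: for a clopen $B$, (1) gives $B=\bigcup_{a\in B}[a]_{\equiv_\rho}$, a union of equivalence classes.

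For the clopen half of (2), if $x\in A\setminus[a]_{\equiv_\rho}$ and $y\in[a]_{\equiv_\rho}$, then $x\rho y$ or $y\rho x$ would yield $x\equiv_\rho y\equiv_\rho a$, contradicting $x\notin[a]_{\equiv_\rho}$; hence $[a]_{\equiv_\rho}$ is clopen. For the connectedness half, I would equip $[a]_{\equiv_\rho}$ with the restricted preorder and check two things: first, the restricted equivalence relation still has $[a]_{\equiv_\rho}$ as a single class, because any chain in $A$ between two elements of $[a]_{\equiv_\rho}$ has all intermediate vertices also in $[a]_{\equiv_\rho}$ (each such vertex is $\equiv_\rho$-equivalent to the endpoints); second, any non-empty clopen subset of $[a]_{\equiv_\rho}$ must, by (1) applied inside the subobject, contain this single class, hence equal $[a]_{\equiv_\rho}$. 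By Lemma~\ref{indecomposable-connected}, $[a]_{\equiv_\rho}$ is connected.

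For (4), each $[a]_{\equiv_\rho}$ is connected by (2) and clopen in $A$; so for any connected $C\subseteq A$ meeting $[a]_{\equiv_\rho}$, the set $C\cap[a]_{\equiv_\rho}$ is a non-empty clopen subset of $C$, forcing $C\subseteq[a]_{\equiv_\rho}$, which identifies $[a]_{\equiv_\rho}$ as the connected component of $a$. The only genuinely nontrivial step is the chain-stays-inside observation used at the start of the connectedness argument in (2); everything else is a formal unwinding of definitions or a standard topological argument.
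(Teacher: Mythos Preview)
Your proof is correct and follows essentially the same plan as the paper's: part~(1) by induction along a $\rho$-chain, part~(3) as an immediate corollary, the clopen half of~(2) by the contrapositive, and~(4) from the fact that a connected clopen subset is a connected component.

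The only noteworthy differences are local. For the connectedness half of~(2), the paper stays in the ambient space: having shown $[a]_{\equiv_\rho}$ is clopen in $A$, any clopen subset of $[a]_{\equiv_\rho}$ in the subspace topology is already clopen in $A$, so one can apply~(1) directly without passing to the restricted preorder or checking that chains stay inside. Your route via the subobject is also fine, but the extra ``chains stay inside'' step is avoidable. For~(4), your argument (clopen $+$ connected $\Rightarrow$ connected component, via intersecting with an arbitrary connected set) is actually cleaner than the paper's, which takes a connected component~$Y$, writes it as a union $\bigcup_{a\in Y}([a]_{\equiv_\rho}\cap Y)$ of relatively clopen pieces, and invokes the Alexandroff property to conclude one piece equals~$Y$; your version never needs that property explicitly.
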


\begin{proof} (1) Suppose $x\in [a]_{\equiv_\rho}$. By definition, there are elements $x_0,x_1,\ldots,x_n\in A$ such that $a=x_0$, $x=x_n$ and, for every $i=1,2,3,\dots,n$, either $x_{i-1}\rho x_{i}$ or $x_{i}\rho x_{i-1}$. Since $a\in B$ and either $a\rho x_1$ or $x_1\rho a$, the fact that $B$ is clopen implies that $x_1\in B$. Then $x\in B$ by induction.

(2) Assume $x\in [a]_{\equiv_\rho}$ and $y\in A$. If $x\rho y$ or $y\rho x$, then $x\equiv_\rho y$, since $\equiv_\rho$ is generated by $\rho$. It follows $y\in [a]_{\equiv_\rho}$, and this proves that $[a]_{\equiv_\rho}$ is clopen. Suppose there are non-empty clopen sets $U,V$ of $(A,\tau_\rho)$ such that $[a]_{\equiv_\rho}=U\cup V$. If $u\in U,\ v\in V$, then $u\equiv_\rho v \equiv_\rho a$. By (1), it follows that $[a]_{\equiv_\rho}\subseteq U\cap V$, proving that $[a]_{\equiv_\rho}$ is connected. 

(3) Let $B$ be a clopen subset of $A$. By (1), it is clear that $B=\bigcup_{a\in B}[a]_{\equiv_\rho}$. 

(4) By (2), every equivalence class modulo $\equiv_\rho$ is a connected component of $A$, since it is connected and clopen. Conversely, let $Y$ be a connected component of $A$ and note that, for every $a\in Y$, $[a]_{\equiv_\rho}\cap Y$ is clopen in $Y$ relatively to the subspace topology. Since $Y=\bigcup_{a\in Y}([a]_{\equiv_\rho}\cap Y)$ and every union of clopen subsets is clopen because $(A,\tau_\rho)$ is an Alexandroff-discrete space, the fact that $Y$ is connected implies that there exists an element $a_0\in Y$ such that $Y=[a_0]_{\equiv_\rho}\cap Y$. In other words, $Y\subseteq [a_0]_{\equiv_\rho}$ and, since $Y$ is a connected component and $[a_0]_{\equiv_\rho}$ is connected, we infer that $[a_0]_{\equiv_\rho}=Y$. 
\end{proof}

An immediate consequence of Lemmas \ref{indecomposable-connected} and \ref{equivalence-classes} is that $$\displaystyle A=\dot{\bigcup}_{[a]_{\equiv_\rho}\in A\!/\equiv_\rho}[a]_{\equiv_\rho}$$ is the unique coproduct decomposition of a preordered set $(A,\rho)$ into a disjoint union of indecomposable clopen subsets. 

\smallskip

For every pair of objects $(A,\rho),(A',\rho')$ in {\bf Preord}, let $R_{A,A'}$ be the relation on the set $\Hom(A,A')$ defined, for every $f,g\colon (A,\rho)\to (A',\rho')$, by $fR_{A,A'}g$ if there exists a clopen subset $B$ of $A$ such that $f|_B$ and $g|_B$ are two trivial morphisms and $f|_{A\setminus B}=g|_{A\setminus B}$.  By definition, given arbitrary trivial morphisms $f,g\colon A\to B$, we have that $fR_{A,B}g$. In the next lemma, we make use of the terminology of \cite[page~51]{ML}.

\begin{Lemma} The assignment $(A,A')\mapsto R_{A,A'}$ is a congruence on the category {\bf Preord}.\end{Lemma}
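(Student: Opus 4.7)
My plan is to verify the two requirements of Mac Lane's definition of a congruence (\cite[p.~51]{ML}): (a) each $R_{A,A'}$ is an equivalence relation on $\Hom(A,A')$, and (b) $R$ is compatible with composition, i.e., $fR_{A,A'}f'$ and $gR_{A',A''}g'$ imply $(gf)R_{A,A''}(g'f')$. The key structural fact driving everything is that clopen subsets are saturated under $\rho$-comparability: if $B\subseteq A$ is clopen and $a\rho b$, then either $a,b\in B$ or $a,b\in A\setminus B$ (this is exactly the definition of clopen). I will also use that unions of clopen subsets are clopen (since $(A,\tau_\rho)$ is Alexandroff-discrete) and that preimages of clopen sets under morphisms of $\pre$ are clopen (by continuity).

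For (a), reflexivity holds with $B=\emptyset$ (restrictions are empty and hence vacuously trivial), and symmetry is immediate from the definition. For transitivity, if $fR_{A,A'}g$ via clopen $B_1$ and $gR_{A,A'}h$ via clopen $B_2$, I take $B:=B_1\cup B_2$. Given $a\rho b$ with $a,b\in B$, the saturation property applied to $B_1$ and $B_2$ forces $a,b$ to lie jointly in $B_1$, or jointly outside $B_1$ but jointly inside $B_2$. In the first case $f(a)=f(b)$ since $f|_{B_1}$ is trivial; in the second case $f=g$ on $A\setminus B_1$ and $g|_{B_2}$ trivial together give $f(a)=g(a)=g(b)=f(b)$. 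By the preceding lemma, $f|_B$ is trivial; symmetrically $h|_B$ is trivial. Off $B$ we have $f=g=h$, so $f|_{A\setminus B}=h|_{A\setminus B}$.

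For (b), given $fR_{A,A'}f'$ via clopen $B\subseteq A$ and $gR_{A',A''}g'$ via clopen $B'\subseteq A'$, I choose
\[ B'':=B\cup f^{-1}(B'), \]
which is clopen in $A$. Off $B''$: $a\notin B$ gives $f(a)=f'(a)$, and $a\notin f^{-1}(B')$ gives $f(a)\notin B'$, hence $g(f(a))=g'(f(a))$; combining, $(gf)(a)=(g'f')(a)$. On $B''$: for $a\rho b$ with $a,b\in B''$, applying the saturation property to $B$ and to $f^{-1}(B')$ leaves only two nontrivial cases. If $a,b\in B$, then $f|_B$ trivial gives $f(a)=f(b)$, whence $(gf)(a)=(gf)(b)$. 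If $a,b\in f^{-1}(B')\setminus B$, then $f(a),f(b)\in B'$ and $f(a)\rho'f(b)$, so $g|_{B'}$ trivial gives $g(f(a))=g(f(b))$. Thus $(gf)|_{B''}$ is trivial by the preceding lemma; the identical argument with $f',g'$ in place of $f,g$ (using $f'=f$ off $B$, so $f'(a)\in B'$ as well) shows $(g'f')|_{B''}$ is trivial.

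The only genuine obstacle is selecting the correct clopen witness in (b); once $B''=B\cup f^{-1}(B')$ is in hand, the saturation property of clopens collapses each case analysis to two lines, and the continuity of morphisms in $\pre$ supplies both the clopenness of $f^{-1}(B')$ and the $\rho'$-comparability of $f(a),f(b)$ needed to feed into the triviality of $g|_{B'}$.
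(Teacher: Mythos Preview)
Your proof is correct and follows essentially the same strategy as the paper: for transitivity you take the union of the two clopen witnesses and use the saturation property of clopen sets to reduce to two cases, exactly as the paper does. The only real difference is in part (b): the paper verifies MacLane's literal requirement, namely that $fR_{A,A'}g$ implies $\ell f h \, R \, \ell g h$ for any composable $h,\ell$, using the single witness $h^{-1}(X)$; you instead prove the (equivalent but formally stronger) two-sided statement $fRf'$ and $gRg'$ $\Rightarrow$ $gf \, R \, g'f'$ in one shot, which forces the slightly more elaborate witness $B''=B\cup f^{-1}(B')$. Both arguments rest on the same two ingredients---saturation of clopens under $\rho$-comparability and continuity of morphisms---so the approaches are the same in substance.
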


\begin{proof} The relations $R_{A,A'}$ are clearly reflexive and symmetric. They are also transitive, because if $f,g,h\colon (A,\rho)\to (A',\rho')$, $fR_{A,A'}g$ and $gR_{A,A'}h$, then there exist clopen subsets $B,C$ of $A$ such that $f|_B$, $g|_B$, $g|_C$ and $h|_C$ are four trivial morphisms, $f|_{A\setminus B}=g|_{A\setminus B}$ and $g|_{A\setminus C}=h|_{A\setminus C}$. Then $B\cup C$ is a clopen subset of $A$. Let us prove that $f|_{B\cup C}$ is a trivial morphism. Suppose $x,y\in B\cup C$ and $x\rho y$. Then we have two cases: either $x,y\in B$, or $x,y\in C\setminus B$. If $x,y\in B$, then $f(x)=f(y)$ because $f|_B$ is a trivial morphism. If  $x,y\in C\setminus B$, then $f(x)=g(x)$ and $f(y)=g(y)$ because $f|_{A\setminus B}=g|_{A\setminus B}$. As $g|_C$ is a trivial morphism, it follows that $g(x)=g(y)$, hence $f(x)=f(y)$. This proves that $f|_{B\cup C}$ is a trivial morphism. Similarly, $h|_{B\cup C}$ is a trivial morphism. Finally, if $x\in A\setminus(B\cup C)$, then $f(x)=g(x)$ because $f|_{A\setminus B}=g|_{A\setminus B}$, and $g(x)=h(x)$ because $g|_{A\setminus C}=h|_{A\setminus C}$. Thus $f(x)=h(x)$. This proves that $f|_{A\setminus (B\cup C)}=h|_{A\setminus  (B\cup C)}$. Therefore $R_{A,A'}$ is an equivalence relation on $\Hom(A,A')$ for each pair $(A,A')$ of objects of {\bf Preord}.

Finally, suppose that $f,g\colon (A,\rho)\to (A',\rho')$ with $fR_{A,A'}g$, $h\colon (B,\sigma)\to (A,\rho)$ and $\ell\colon(A',\rho')\to(C,\tau)$. Then there exists a clopen subset $X$ of $A$ such that $f|_X$ and $g|_X$ are two trivial morphisms and $f|_{A\setminus X}=g|_{A\setminus X}$. Then $h^{-1}(X)$ is a clopen subset of $(B,\sigma)$. The mapping $\ell f h|_{h^{-1}(X)}$ is a trivial morphism, because if $b,b'\in h^{-1}(X)$ and $b\sigma b'$, then $h(b),h(b')\in X$ and $h(b)\rho h(b')$. Since $f|_X$ is a trivial morphism, we get that $fh(b)=fh(b')$. Therefore $\ell fh(b)=\ell fh(b')$. This proves that $\ell f h|_{h^{-1}(X)}$ is a trivial morphism. Similarly, $\ell g h|_{h^{-1}(X)}$ is a trivial morphism. It remains to prove that $\ell f h|_{B\setminus h^{-1}(X)}=\ell g h|_{B\setminus h^{-1}(X)}$. Suppose $b\in B\setminus h^{-1}(X)$. Then $h(b)\in A\setminus X$, so that $f(h(b))=g(h(b))$ because $f|_{A\setminus X}=g|_{A\setminus X}$. Therefore $\ell(f(h(b)))=\ell(g(h(b)))$.\end{proof}

It is therefore possible to construct the quotient category \underline{{\bf Preord}}${}:={}${\bf Preord}${}/R$ \cite[pp.~51-52]{ML}. We will call it the {\em stable category}. Its objects are all non-empty preordered sets $(A,\rho)$, like in {\bf Preord}. The morphisms $(A,\rho)\to (A',\rho')$ are the equivalence classes of $\Hom(A,A')$ modulo $R_{A,A'}$, that is, $$\Hom_{\mbox{\underline{\bf Preord}}}(A,A'):=\Hom_{\mbox{\bf Preord}}(A,A')/{ R_{A,A'}}.$$

There is a canonical functor $F\colon {}${{\bf Preord}}${}\to{}$\underline{{\bf Preord}}. Our notation will be $F(A)=\underline{A}$ for every object $A$ and $F(f)=\underline{f}$ for every morphism $f\colon A\to A'$. 
 
\begin{remark}{\rm 
The functor $F$ preserves coproducts. In fact, let $(A_i,\rho_i)$, $i\in I$, be preordered sets and let $C:=(\coprod_{i\in I}A_i, \rho)$ be their coproduct in $\pre$, so $C$ is the disjoint union and $\rho$ is the coproduct preorder. For any $i\in I$, let $\varepsilon_i\colon  A_i\to C$ be the canonical embedding (for all $x\in A_i, \varepsilon_i(x):=(x,i)$). Fix any preordered set $Y$ and, for all $i\in I$,  consider morphisms $\underline{f_i}\colon \underline{A_i}\to \underline{Y}$ in $\underline{\pre}$. By the universal property of $C$ in $\pre$, there exists a unique morphism $f\colon C\to Y$ (in $\pre$) such that $f_i=f\circ \varepsilon_i$ for all $i\in I$. In particular, $\underline{f_i}=\underline{f}\circ \underline{\varepsilon_i}$. Suppose there is a morphism $\underline{g}\colon \underline{C}\to \underline{Y}$ in $\underline{\pre}$ such that $\underline{f_i}=\underline{g}\circ \underline{\varepsilon_i}$, for any $i\in I$. According to the last condition, for every $i\in I$ there exists a clopen subset $B_i$ of $A_i$ such that $f_i|_{B_i}$ and $(g\circ\varepsilon_i)|_{B_i}$ are trivial morphisms and $f_i|_{A_i\setminus B_i} =(g\circ \varepsilon_i)|_{A_i\setminus B_i}$. By Remark \ref{clopen-remark}(2), the canonical image $B_i\times \{i\}$ of $B_i$ in $C$ is clopen in $C$ and, since $(C,\tau_\rho)$ is Alexandroff-discrete, $B:=\bigcup_{i\in I}B_i\times\{i\}$ is clopen in $C$. Now it is straightforward to check that $f|_B, g|_B$ are trivial morphisms and that $f|_{C\setminus B}=g|_{C\setminus
 B}$. Hence $\underline{ f}=\underline{g}$. This concludes the proof. }
\end{remark}

\smallskip

The category \underline{{\bf Preord}} is a pointed category, that is, has a zero object. In fact, for every object $(A,\rho)$ in {\bf Preord} and every trivial object $(A',\rho')$, all morphisms $(A,\rho)\to(A',\rho')$ and $(A',\rho')\to(A,\rho)$ are trivial, so that all pairs of morphisms $f,g\colon (A,\rho)\to(A',\rho')$ and $f',g'\colon(A',\rho')\to(A,\rho)$ are equivalent modulo $R_{A,A'}$ or $R_{A',A}$. Thus there is a unique morphism $(A,\rho)\to(A',\rho')$ and $(A',\rho')\to(A,\rho)$ in the category \underline{{\bf Preord}}. Notice that the objects of {\bf Preord} are non-empty sets, so that there is always at least one morphism $(A,\rho)\to(A',\rho')$. Hence, all trivial objects of {\bf Preord} become zero objects in the category \underline{{\bf Preord}}. In Proposition~\ref{next}, we will generalize this fact.

\smallskip

For every preordered set $(A,\rho)$, let $A^*$ be the clopen subset of $A$ defined by $$A^*:= \dot{\bigcup}_{[a]_{\equiv_\rho}\in T}[a]_{\equiv_\rho},$$ where $T:=\{\,[a]_{\equiv_\rho}\in 
A/\!\!\equiv_\rho{}\mid |[a]_{\equiv_\rho}|> 1\,\}$. We will say that $(A,\rho)$ is {\em minimal} if either $A$ is not trivial and $A=A^*$, or $A$ is trivial and $|A|=1$. That is, a non-trivial preordered set $A$ is minimal if and only if, for every $a\in A$, there exists $b\in A$, $b\ne a$, such that either $a\rho b$ or $b\rho a$.

\begin{proposition}\label{next} Two non-empty preordered sets $(A,\rho),(A',\rho')$ are isomorphic objects in \underline{{\bf Preord}}  if and only if there exist clopen subsets $X\subseteq A$ and $X'\subseteq A'$ such that $X$ and $X'$ with the induced preorders are trivial objects or empty sets, and $A\setminus X$, $A'\setminus X'$ with the induced preorders are isomorphic objects in {\bf Preord} or $A\setminus X=A'\setminus X'=\emptyset$.\end{proposition}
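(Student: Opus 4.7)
The plan is to treat the two directions separately, using the canonical decomposition of each preordered set into its ``minimal part'' $A^*$ (the union of non-singleton $\equiv_\rho$-equivalence classes) and its trivial complement $A\setminus A^*$ (a disjoint union of singleton classes); by Lemma~\ref{equivalence-classes} both pieces are clopen.

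For the implication $(\Leftarrow)$, suppose $X\subseteq A$ and $X'\subseteq A'$ are as in the statement. Since $X$ is clopen, no $\rho$-relation crosses between $X$ and $A\setminus X$, so $A = X\coprod (A\setminus X)$ in $\pre$. Assuming $A\setminus X\neq\emptyset$, I would pick $a_0\in A\setminus X$ and define $\pi\colon A\to A\setminus X$ by $\pi|_{A\setminus X}=\mathrm{id}$ and $\pi(X)=\{a_0\}$; the clopen condition is exactly what is needed to make $\pi$ a morphism in $\pre$. Writing $\iota\colon A\setminus X\hookrightarrow A$ for the inclusion, one has $\pi\iota=\mathrm{id}_{A\setminus X}$ on the nose, while $\iota\pi\,R_{A,A}\,\mathrm{id}_A$ is witnessed by the clopen subset $X$: on $X$ both $\iota\pi$ and $\mathrm{id}_A$ are trivial morphisms (using triviality of $X$), and on $A\setminus X$ they coincide. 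Hence $\underline{A}\cong\underline{A\setminus X}$ in $\underline{\pre}$, and symmetrically $\underline{A'}\cong\underline{A'\setminus X'}$; any $\pre$-isomorphism $A\setminus X\cong A'\setminus X'$ then yields the required isomorphism in $\underline{\pre}$. In the remaining case $A\setminus X=A'\setminus X'=\emptyset$, both $A$ and $A'$ are themselves trivial, hence zero objects of $\underline{\pre}$, and therefore isomorphic.

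For the implication $(\Rightarrow)$, I would set $X:=A\setminus A^*$ and $X':=A'\setminus (A')^*$: both are clopen by construction and carry the equality preorder, hence are trivial or empty. It remains to exhibit a $\pre$-isomorphism $A^*\cong (A')^*$ or to conclude both are empty. Choose inverse isomorphisms $\underline f, \underline g$ between $\underline A$ and $\underline{A'}$ with representatives $f, g$. The congruence $\underline{gf}=\underline{\mathrm{id}_A}$ provides a clopen $Y\subseteq A$ with $Y$ trivial and $gf=\mathrm{id}$ on $A\setminus Y$. The key observation is that every $\equiv_\rho$-class contained in $Y$ must be a singleton: otherwise, connectedness of the class together with $Y$ trivial would produce distinct $x,y\in Y$ with $x\rho y$, contradicting triviality. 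Consequently $Y\subseteq A\setminus A^*$, so $A^*\subseteq A\setminus Y$ and $gf|_{A^*}=\mathrm{id}_{A^*}$; in particular $f|_{A^*}$ is injective. Symmetric statements hold for $g$ and $(A')^*$.

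The main remaining step is to show $f(A^*)\subseteq (A')^*$. Since $f$ preserves $\rho$, it preserves the generated equivalence $\equiv_\rho$, so $f([a]_{\equiv_\rho})\subseteq [f(a)]_{\equiv_{\rho'}}$ for every $a\in A$. For $a\in A^*$ the class $[a]_{\equiv_\rho}$ lies in $A^*$ and has cardinality $>1$, while $f$ is injective on it; hence $|[f(a)]_{\equiv_{\rho'}}|>1$ and $f(a)\in (A')^*$. Symmetrically $g((A')^*)\subseteq A^*$. The restrictions $f|_{A^*}\colon A^*\to (A')^*$ and $g|_{(A')^*}\colon (A')^*\to A^*$ are then morphisms of $\pre$ composing to the respective identities, and are thus mutually inverse $\pre$-isomorphisms. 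In the edge case $A^*=\emptyset$, the inclusion $g((A')^*)\subseteq A^*$ forces $(A')^*=\emptyset$ too, placing us in the ``both empty'' alternative. I expect the principal obstacle to be precisely this verification that $f$ carries $A^*$ into $(A')^*$: a representative $f$ may behave very erratically on the trivial part $A\setminus A^*$, and only the combination of component-preservation and the injectivity provided by $gf|_{A^*}=\mathrm{id}$ lets us cut the stable isomorphism down to a genuine $\pre$-isomorphism of the minimal parts.
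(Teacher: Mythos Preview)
Your proof is correct and follows essentially the same strategy as the paper: both arguments hinge on the canonical decomposition into $A^*$ and its trivial complement, and on the observation that any clopen set witnessing a congruence with the identity must be trivial and hence disjoint from $A^*$. The only difference is organizational: the paper first replaces $A$ by $A^*$ in $\underline{\pre}$ (its Step~1) and then invokes rigidity of minimal objects (its Steps~2--3), whereas you work directly with representatives $f\colon A\to A'$, $g\colon A'\to A$ and restrict to $A^*$, which requires the extra (but easy) check that $f(A^*)\subseteq (A')^*$.
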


\begin{proof} {\em Step 1. For every preordered set $(A,\rho)$, the sets $A$ and $A^*$ with the induced preorder are isomorphic objects in} {\underline{\bf Preord}  }.

Fix an element $a_0$ in $A^*$. Let $f\colon A\to A^*$ be defined by $f(x)=x$ for every $x\in A^*$ and $f(x)=a_0$ for every $x\in A\setminus A^*$. Let $\varepsilon\colon A^*\to A$ be the inclusion. Then $f\varepsilon$ is the identity $\iota_{A^*}$ on $A^*$, and $\varepsilon f\,R_{A,A}\iota_{A}$, because (1) $\varepsilon f|_{A^*}=\iota_{A}|_{A^*}$, (2) $\varepsilon f|_{A^*}$ is the constant mapping equal to $a_0$, hence is a trivial morphism, and (3) $\iota_{A}|_{A\setminus A^*}$ is the trivial morphism because ${A\setminus A^*}$ is a trivial object. Hence $A$ and $A^*$ are isomorphic in { $\underline{\pre}$}.

\smallskip

{\em Step 2. If $(A^*,\rho)$ is a minimal preordered set and $f$ is an endomorphism of $(A^*,\rho)$ such that $f\,R_{A^*,A^*}\iota_{A^*}$, then $f=\iota_{A^*}$.}

Suppose $f\,R_{A^*,A^*}\iota_{A^*}$. Then there exists a clopen subset $B$ of $A^*$ such that $f|_B$ and $\iota_{A^*}|_B$ are two trivial morphisms and $f|_{A^*\setminus B}=\iota_{A^*}|_{A^*\setminus B}$. But $\iota_{A^*}|_B$ trivial morphism and $A^*$ minimal implies $B=\emptyset$. Thus $f=\iota_{A^*}$ follows from $f|_{A^*\setminus B}=\iota_{A^*}|_{A^*\setminus B}$. 

\smallskip

{\em Step 3. Two minimal preordered sets $(A^*,\rho),({A'}^*,\rho')$ are isomorphic objects in \underline{{\bf Preord}}  if and only 
they  are isomorphic objects in} {\bf Preord}.

Assume $(A^*,\rho)$ and $({A'}^*,\rho')$ isomorphic in \underline{{\bf Preord}}. Then there exist morphisms $f\colon A^*\to {A'}^*$ and $g\colon {A'}^*\to A^*$ in {\bf Preord} such that $gf\,R_{A^*,A^*}\iota_{A^*}$ and $fg\,R_{{A'}^*,{A'}^*}\iota_{{A'}^*}$.  By Step 2, we have $gf=\iota_{A^*}$ and $fg=\iota_{{A'}^*}$.  Thus $(A^*,\rho)$ and $({A'}^*,\rho')$ are isomorphic in {{\bf Preord}}. The converse is clear.

\smallskip

{\em Step 4. If
$(A,\rho),(A',\rho')$ are isomorphic in} \underline{{\bf Preord}}{\em,  then there exist clopen subsets $X\subseteq A$ and $X'\subseteq A'$ such that $X$ and $X'$ with the induced preorders are trivial objects or empty sets, and $A\setminus X$, $A'\setminus X'$ with the induced preorders are isomorphic objects in {\bf Preord} or $A\setminus X=A'\setminus X'=\emptyset$.}

Suppose $(A,\rho)\cong(A',\rho')$ in \underline{{\bf Preord}}. Set $X:=A\setminus A^*$ and $X':=A'\setminus {A'}^*$ . Then $X$ and $X'$ are clopen subsets of $A$ and $A'$ respectively and, with the induced preorders, they are trivial objects or empty sets. By Step 1, $(A^*,\rho)\cong({A'}^*,\rho')$ in \underline{{\bf Preord}}. By Step 3, $(A^*,\rho)\cong({A'}^*,\rho')$ in {\bf Preord}. 

\smallskip

{\em Step 5. If there exist clopen subsets $X\subseteq A$ and $X'\subseteq A'$ such that $X$ and $X'$ with the induced preorders are trivial objects and $A\setminus X,A'\setminus X'$ with the induced preorders are isomorphic objects in {\bf Preord}, then $(A,\rho)\cong(A',\rho')$ in} \underline{{\bf Preord}}. 

If $X$ and $X'$ are trivial objects, then $A^*=(A\setminus X)^*$ and ${A'}^*=(A'\setminus X')^*$. Thus $A\setminus X\cong A'\setminus X'$ implies $(A\setminus X)^*\cong (A'\setminus X')^*$, hence $A^*\cong {A'}^*$ in {\bf Preord}. Let $f\colon A\to A'$ be any mapping that restrict to an isomorphism $\varphi\colon A^*\to{A'}^*$ and is constant on $A\setminus A^*$. Similarly, let $g\colon A'\to A$ be any mapping that restricts to $\varphi^{-1}\colon {A'}^*\to{}A^*$ and is constant of $A'\setminus {A'}^*$. Then $\underline{f}$ and $\underline{g}$ are mutually inverse isomorphisms in \underline{{\bf Preord}} between $\underline{A}$ and $\underline{A'}$.\end{proof}

\begin{definition}{\rm 
Let $f\colon A\to A'$ be a morphism in {\bf Preord}. We say that a morphism $k\colon X\to A$ in $\pre $ is a \emph{prekernel} of $f$ if the following properties are satisfied: 
\begin{enumerate}
	\item $fk$ is a trivial morphism.
	\item Whenever $\lambda \colon Y\to A$ is a morphism in $\pre$ and $f\lambda$ is trivial, then there exists a unique morphism $\lambda'\colon Y\to X$ in $\pre$ such that $\lambda=k\lambda'$. 
\end{enumerate}}
\end{definition}

Recall that, for every mapping $f\colon A\to A'$, the equivalence relation $\sim_f$ on $A$, associated to $f$, is defined, for every $a,b\in A$, by $a\sim_f b$ if $f(a)=f(b)$. In the next proposition, we show that every morphism in $\pre$ has a prekernel:

\begin{proposition}\label{prekernel} Let $f\colon (A,\rho)\to (A',\rho')$ be a morphism in $\pre$. Then a prekernel of $f$ is the morphism $k\colon (A,\rho{\,\,}\cap\!\sim_f)\to (A,\rho)$, where $k$ the identity mapping and $\sim_f$ is the equivalence relation on $A$ associated to $f$. \end{proposition}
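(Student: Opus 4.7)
The plan is to verify the two defining properties of a prekernel directly, using the characterization of trivial morphisms given in the earlier Lemma: a morphism $h\colon(X,\sigma)\to(Y,\tau)$ in $\pre$ is trivial if and only if $x\sigma x'$ implies $h(x)=h(x')$ for all $x,x'\in X$.

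First I would note that $\rho\cap{\sim_f}$ is indeed a preorder on $A$ (intersection of two reflexive transitive relations is reflexive and transitive), and that the identity mapping $k\colon(A,\rho\cap{\sim_f})\to(A,\rho)$ is a morphism in $\pre$ since $\rho\cap{\sim_f}\subseteq\rho$. For condition (1), suppose $a\,(\rho\cap{\sim_f})\,b$. By definition of $\sim_f$ this gives $f(a)=f(b)$, i.e.\ $(fk)(a)=(fk)(b)$, so by the characterization above $fk$ is trivial.

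For condition (2), let $\lambda\colon(Y,\sigma)\to(A,\rho)$ be a morphism in $\pre$ with $f\lambda$ trivial. Since $k$ is the identity mapping on the underlying set, the only candidate for $\lambda'$ at the level of sets is $\lambda$ itself; this immediately yields uniqueness of $\lambda'$. It then suffices to check that $\lambda$, regarded as a map into $(A,\rho\cap{\sim_f})$, is still a morphism in $\pre$. Given $y\sigma y'$ in $Y$, we already have $\lambda(y)\rho\lambda(y')$ because $\lambda$ is a morphism. Moreover $f\lambda$ trivial together with $y\sigma y'$ forces $f(\lambda(y))=f(\lambda(y'))$ by the earlier Lemma, so $\lambda(y)\sim_f\lambda(y')$. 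Hence $\lambda(y)\,(\rho\cap{\sim_f})\,\lambda(y')$, confirming that $\lambda'\colon(Y,\sigma)\to(A,\rho\cap{\sim_f})$ is a morphism with $k\lambda'=\lambda$.

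I expect no serious obstacle: the argument is a direct unwinding of definitions, and the only subtle point is to remember that the underlying map of a prekernel need not be an inclusion in the usual set-theoretic sense, only a morphism in $\pre$; here it is the identity map on $A$, but the domain carries a strictly finer preorder, namely $\rho\cap{\sim_f}$, which is precisely what is needed to internalize the condition $f(a)=f(b)$ into the preorder structure.
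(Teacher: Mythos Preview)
Your proof is correct and follows essentially the same approach as the paper's: both derive uniqueness from $k$ being the identity on underlying sets, and both verify existence by checking that $y\sigma y'$ forces $\lambda(y)\sim_f\lambda(y')$ via the triviality of $f\lambda$. Your write-up is slightly more explicit (you also verify that $\rho\cap{\sim_f}$ is a preorder, that $k$ is a morphism, and that $fk$ is trivial), but the argument is the same.
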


\begin{proof} By definition, we must prove that $fk$ is a trivial morphism and, for every morphism $g\colon (B,\sigma)\to  (A,\rho)$  with $fg$ a trivial morphism, there exists a unique morphism $g'\colon (B,\sigma)\to  (A,\rho{\,\,}\cap\!\sim_f)$ such that $g=kg'$. The uniqueness follows immediately from the fact that $k$ is the identity mapping on the set $A$, so that $g=kg'$ implies that $g=g'$ as mappings of $B$ into $A$. As far as the existence is concerned, we must show that the morphism $g\colon (B,\sigma)\to  (A,\rho)$ is also a morphism $(B,\sigma)\to  (A,\rho{\,\,}\cap\!\sim_f)$, that is, that $b,b'\in B$ and $b\sigma b'$ implies $g(b)\sim_fg(b')$. Now $fg$ is a trivial morphism, so that $b,b'\in B$ and $b\sigma b'$ imply $fg(b)=fg(b')$, hence $g(b)\sim_fg(b')$.\end{proof}

We don't give a proof of the next proposition here, because it will be proved in a much greater generality in Proposition~\ref{prekernel-properties'}.

\begin{proposition}\label{prekernel-properties} 
Let $f\colon A\to A'$ be a morphism in $\pre$ and let $\mu\colon X\to A$ be a prekernel of $f$. The following properties hold. 
\begin{enumerate}
	\item $\mu$ is a monomorphism. 
	\item If $\lambda\colon Y\to A$ is any other prekernel of $f$, then there exists a unique isomorphism $\lambda'\colon Y\to X$ such that $\lambda=\mu\lambda'$. 
\end{enumerate}
\end{proposition}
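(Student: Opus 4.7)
The plan is the standard universal-property argument, and the only auxiliary fact I need is that a trivial morphism remains trivial after pre- or post-composition with an arbitrary morphism: if $h = \alpha\beta$ factors through a trivial object $T$, then for any $\varphi,\psi$ the composite $\varphi h \psi$ still factors through $T$. I will use this silently throughout.

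For (1), I would show $\mu$ is a monomorphism by a direct appeal to the uniqueness clause in the definition of prekernel. Suppose $\alpha,\beta\colon Z\to X$ satisfy $\mu\alpha=\mu\beta$, and set $\lambda:=\mu\alpha\colon Z\to A$. Since $f\mu$ is trivial by condition (1) of the prekernel definition, $f\lambda=(f\mu)\alpha$ is trivial as well. Condition (2) of the prekernel definition then guarantees a \emph{unique} $\lambda'\colon Z\to X$ with $\lambda=\mu\lambda'$. Both $\alpha$ and $\beta$ qualify, so $\alpha=\beta$.

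For (2), let $\lambda\colon Y\to A$ be another prekernel of $f$. Both $f\mu$ and $f\lambda$ are trivial, so applying the universal property of $\mu$ to the morphism $\lambda$ yields a unique $\lambda'\colon Y\to X$ with $\lambda=\mu\lambda'$, and applying the universal property of $\lambda$ to the morphism $\mu$ yields a unique $\mu'\colon X\to Y$ with $\mu=\lambda\mu'$. Combining these gives $\mu(\lambda'\mu')=\lambda\mu'=\mu=\mu\iota_X$, and part (1) (the monomorphism property just established) forces $\lambda'\mu'=\iota_X$. By the symmetric argument $\mu'\lambda'=\iota_Y$, so $\lambda'$ is an isomorphism. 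Its uniqueness among morphisms satisfying $\lambda=\mu\lambda'$ is already built into the prekernel definition of $\mu$.

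Since the argument is purely formal once the ``trivial morphisms form a two-sided ideal'' observation is in hand, there is no real obstacle. The mild point worth flagging is that in the general setting of Proposition~\ref{prekernel-properties'} one must check this ideal property relative to the class $\Cal Z$, but it is immediate from the definition of $\Triv_{\Cal Z}$.
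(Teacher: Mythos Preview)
Your proof is correct and follows essentially the same argument as the paper's proof of Proposition~\ref{prekernel-properties'} (to which the paper defers for this statement): the uniqueness clause of the prekernel yields the monomorphism property, and the two universal properties combined with part~(1) give mutually inverse comparison maps. The only cosmetic difference is in variable names and that the paper phrases the ideal property of trivial morphisms as an ``a fortiori'' remark rather than isolating it explicitly.
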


Since the category \underline{{\bf Preord}} is a pointed category, that is, a category with a zero object $\underline{0}=\underline{T}$ for every trivial object $(T,=)$, for any pair of objects $\underline{A},\underline{A'}$ of \underline{{\bf Preord}}, it is possible to define the {\em zero morphism} $\underline{0}_{\underline{A},\underline{A'}}\colon\underline{A}\to\underline{A'}$ as the composite morphism of the unique morphism of $\underline{A}$ into $\underline{0}$ and the unique morphism of $\underline{0}$ into $\underline{A'}$. For instance, for any constant mapping $c\colon A\to A'$, $\underline{c}\colon\underline{A}\to\underline{A'}$  is clearly   $\underline{0}_{\underline{A},\underline{A'}}\colon\underline{A}\to\underline{A'}$. More generally, for a morphism $g\colon A\to A'$, one has that $\underline{g}=\underline{0}_{\underline{A},\underline{A'}}$ if and only if $g$ is a trivial morphism. By definition, the {\em kernel} of any morphism $\underline{f}\colon\underline{A}\to\underline{A'}$ in \underline{{\bf Preord}} is the equalizer of $\underline{f}$ and the zero morphism $\underline{0}_{\underline{A},\underline{A'}}$.

\begin{proposition}\label{2.12} If $f\colon (A,\rho)\to (A',\rho')$ is a morphism in {\bf Preord} and $k\colon$\linebreak $(A,\rho{\,\,}\cap\!\sim_f)\to (A,\rho)$ is a prekernel of $f$, then $\underline{k}$ is a kernel of $\underline{f}$ in the pointed category \underline{{\bf Preord}}.\end{proposition}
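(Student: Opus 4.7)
The plan is to unpack what it means for $\underline{k}$ to be a kernel (equalizer with the zero morphism) and then reduce each half to the prekernel property of $k$ combined with the explicit description of trivial morphisms given in the earlier lemma.

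First I would verify that $\underline{f}\,\underline{k} = \underline{0}_{\underline{A},\underline{A'}}$. By the characterization recalled just before the proposition, it suffices to check that $fk$ is a trivial morphism, and this is exactly condition (1) in the definition of prekernel.

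Next I would establish the universal property. Given a morphism $\underline{\lambda}\colon\underline{Y}\to\underline{A}$ with $\underline{f}\,\underline{\lambda}=\underline{0}$, pick any representative $\lambda\colon Y\to A$ in $\pre$. Using the fact that a morphism has zero image in the stable category exactly when it is trivial in $\pre$ (together with the remark that being trivial depends only on the source preorder and the equalities forced in the target, so the $R$-relation preserves triviality), I would argue that $f\lambda$ is a trivial morphism. Condition (2) in the definition of prekernel then yields a unique $\lambda'\colon Y\to (A,\rho\cap\sim_f)$ in $\pre$ with $\lambda = k\lambda'$, and in particular $\underline{\lambda} = \underline{k}\,\underline{\lambda'}$.

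The step I expect to require the most care is the uniqueness of $\underline{\lambda'}$ in the stable category, i.e., showing that $\underline{k}$ is monic in $\underline{\pre}$. Suppose $\underline{k}\,\underline{\mu'} = \underline{k}\,\underline{\lambda'}$ for two morphisms $\mu',\lambda'\colon Y\to (A,\rho\cap\sim_f)$. The key observation is that $k$ is the identity map on the underlying set $A$, so $k\mu'$ and $k\lambda'$ agree set-theoretically with $\mu'$ and $\lambda'$. Thus $k\mu' R_{Y,A} k\lambda'$ produces a clopen set $C\subseteq Y$ on which $\mu'|_C$ and $\lambda'|_C$ are trivial as morphisms into $(A,\rho)$, with $\mu'$ and $\lambda'$ agreeing on $Y\setminus C$. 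Since triviality of a morphism into $(A,-)$ is a condition only on the source preorder and on equalities of values in $A$, these same restrictions are trivial viewed as morphisms into $(A,\rho\cap\sim_f)$. Hence $\mu' R_{Y,(A,\rho\cap\sim_f)} \lambda'$, giving $\underline{\mu'}=\underline{\lambda'}$, as required.
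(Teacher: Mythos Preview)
Your proof is correct and matches the paper's approach for the first two steps: verifying $\underline{f}\,\underline{k}=0$ via the triviality of $fk$, and obtaining the factorisation through $\underline{k}$ by lifting the given $\underline{\lambda}$ to a representative in $\pre$, using that $\underline{g}=0$ iff $g$ is trivial, and then invoking the prekernel property.

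The difference lies in the uniqueness step. The paper dispatches uniqueness in a single sentence, asserting that ``the identity $A\to A$ is another prekernel \dots\ so that $k$ is an isomorphism by Proposition~\ref{prekernel-properties}(2)''; this is terse to the point of being opaque, and read literally $k\colon (A,\rho\cap{\sim_f})\to(A,\rho)$ is not an isomorphism in $\pre$ unless $f$ itself is trivial. Your direct verification that $\underline{k}$ is a monomorphism in $\underline{\pre}$---exploiting that $k$ is the identity on the underlying set and that the triviality condition on a morphism into $(A,-)$ depends only on the source preorder and on equalities of values in $A$, not on the preorder of the target---is clean and transparent, and actually fills in what the paper's sentence leaves implicit.
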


\begin{proof} We must show that $\underline{k}$ satisfies the universal property of kernels. Since $fk$ is trivial (see Proposition \ref{prekernel}), the morphism $\underline{fk}$ is the zero morphism. Take any morphism $k'\colon (K',\tau)\to (A,\rho)$ in $\pre$ such that $\underline{ fk'}$ is the zero morphism, i.e., $fk'$ is trivial. By Proposition \ref{prekernel}, there exists a unique morphism $u\colon (K',\tau)\to (A,\rho{\,\,}\cap \sim_f)$ such that $k'=ku$. In particular, $\underline{k'}=\underline{ku}$. For the uniqueness of $\underline{u}$ in $\underline{\pre}$, we have that the identity $A\to A$ is another prekernel by Proposition~\ref{prekernel}, so that $k$ is an isomorphism by Proposition~\ref{prekernel-properties}(2).
\end{proof}

\begin{example}\label{quotient-preordered-set}  (Quotient preordered set) {\rm Let $(A,\rho)$ be a preordered set and $\sim$ an equivalence relation on $A$. Then $\rho$  induces a well defined preorder $\rho'$ on the quotient set $A/\!\!\sim$ (via the position $[a_1]_\sim\,\,\rho'\,\,[a_2]_\sim$ if $a_1\,\rho\, a_2$ for all $a_1,a_2\in A$) if and only if $\sim{\!\!}\subseteq\rho$. In fact, if the relation $\rho'$ on the quotient set $A/\!\!\sim$  is a well defined reflexive relation, then $a_1\sim a_2$ implies $[a_1]_\sim=[a_2]_\sim$, so $[a_1]_\sim\rho'[a_2]_\sim$, i.e., $a_1\,\rho\, a_2$. This proves that  $\sim{\!\!}\subseteq\rho$. Conversely, if $\sim{\!\!}\subseteq\rho$, then $[a_1]_\sim=[a_2]_\sim$ and $[a_3]_\sim=[a_4]_\sim$ imply $a_1\,\rho\,a_3$ if and only if $a_2\,\rho\,a_4$.

If these two equivalent conditions hold, then the canonical projection $\pi\colon A\to A/\!\!\sim$ is a morphism  in {\bf Preord} and its prekernel is the identity $k\colon (A,\sim)\to (A,\rho)$. 

For simplicity of notation, we will indicate by the same symbol $\rho$ the preorder induced by $\rho$ on the quotient set $A/\!\!\sim$, provided $\sim{\!\!}\subseteq\rho$.}\end{example}

 \begin{remark}\label{proj-epi-stable} {\rm 
Let $A$ be a set and let $\sim,\ \rho$ be, respectively, an equivalence relation and a preorder on $A$ such that $\sim{}\subseteq \rho$. As we have seen, the canonical projection $\pi\colon  (A,\rho)\to (A/\!\!\sim,\rho)$ is a morphism in $\pre$. Then:
\begin{enumerate}
	\item If $B\subseteq A$ is a clopen set, then $\pi(B)$ is clopen in $Q:=A/\!\!\sim$. 
	\item $\underline{\pi}$ is an epimorphism in $\underline{\pre}$. 
\end{enumerate}
\begin{proof}
(1) Take elements $p\in \pi(B)$ and $q \in Q\setminus\pi(B)$, thus $p=\pi(b), q=\pi(x)$ for some $b\in B,x\in A\setminus B$. Since $B$ is clopen in $A$, we have $b\centernot{\rho\,} x$ and $x\centernot{\rho\,} b$ and, by definition, $ p\centernot{\rho\,}q$ and $q\centernot{\rho\,} p$. 

(2) Let $g,h\colon (Q,\rho)\to (T,\tau)$ be morphisms in $\pre$ with $\underline{g\pi}=\underline{h\pi}$ in $\underline{\pre}$. By definition, there is a clopen set $B\subseteq A$ such that $(g\pi)|_B, (h\pi)|_B$ are trivial morphisms and $(g\pi)|_{A\setminus B}=(h\pi)|_{A\setminus B}$. By part (1), $B':=\pi(B)$ is clopen in $Q$ and it is easily seen that $\pi(A\setminus B)=Q\setminus B'$. Then $g|_{B'},h|_{B'}$ are trivial morphisms and $g|_{Q\setminus B'}=g|_{Q\setminus B'}$. Hence $\underline{g}=\underline{h}$, and the conclusion follows. 
\end{proof} }
\end{remark}

It is easy to see that:

\begin{proposition} A morphism  $f\colon (A,\rho)\to (A',\rho')$ in {\bf Preord} is a monomorphism in {\bf Preord} if and only if it is an injective mapping, and is an epimorphism if and only if it is a surjective mapping.\end{proposition}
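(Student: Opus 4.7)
The proof splits naturally into the monomorphism statement and the epimorphism statement, and in each case the nontrivial direction is the forward one (mono implies injective, epi implies surjective); both reverse directions follow from the standard set-theoretic argument applied to the underlying mappings, since morphisms in $\pre$ are in particular mappings.

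For monomorphisms, suppose $f\colon(A,\rho)\to(A',\rho')$ is a monomorphism in $\pre$ and take $a,b\in A$ with $f(a)=f(b)$. The plan is to test against the singleton trivial object $(\{*\},=)$, which lies in $\pre$. Define $g,h\colon(\{*\},=)\to(A,\rho)$ by $g(*)=a$ and $h(*)=b$; both are morphisms in $\pre$ by reflexivity of $\rho$. Since $fg=fh$, the mono property forces $g=h$, hence $a=b$.

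For epimorphisms, suppose $f$ is an epimorphism but, aiming at a contradiction, that there exists $a'\in A'\setminus f(A)$. I will construct a preordered set $(C,\sigma)$ and two morphisms $g,h\colon(A',\rho')\to(C,\sigma)$ with $gf=hf$ but $g\neq h$. Take $C:=A'\cup\{z\}$ with $z$ a new point, and define $\sigma$ on $C$ by: $x\sigma y$ for $x,y\in A'$ iff $x\rho'y$; $z\sigma y$ iff $a'\rho'y$ for $y\in A'$; $x\sigma z$ iff $x\rho' a'$ for $x\in A'$; and $z\sigma z$. In other words, $z$ behaves exactly like a duplicate of $a'$. Reflexivity and transitivity of $\sigma$ follow directly from those of $\rho'$. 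Now let $g\colon A'\to C$ be the inclusion, and let $h\colon A'\to C$ send $a'\mapsto z$ and fix every other element of $A'$. By construction of $\sigma$, both $g$ and $h$ are morphisms in $\pre$: the only potentially problematic case for $h$ is a comparison involving $a'$, and the definition of $\sigma$ was tailored precisely so that $h$ preserves the order there. Since $a'\notin f(A)$, the two maps agree on $f(A)$, so $gf=hf$; but $g(a')=a'\neq z=h(a')$, contradicting the assumption that $f$ is an epimorphism.

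The main (and only) obstacle is making sure the preorder $\sigma$ in the epimorphism half is set up so that \emph{both} $g$ (inclusion) and $h$ (the ``swap'') are simultaneously order-preserving; the symmetric definition of $\sigma$ around the pair $(a',z)$ handles this. Everything else is bookkeeping on mappings.
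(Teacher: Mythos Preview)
Your proof is correct. The paper itself omits the proof entirely, stating only ``It is easy to see that'' before the proposition, so there is no argument to compare against; your approach---testing against the one-point object for monomorphisms and duplicating a missed point for epimorphisms---is the standard one and is carried out cleanly.
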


\begin{remark}{\rm Trivial objects are exactly the projective objects in {\bf Preord}. In fact, if $(X,=)$ is a trivial object, $f\colon (A,\rho)\to (A',\rho')$ is an epimorphism and\linebreak $g\colon (X,=)\to (A',\rho')$ is a morphism, then by the Axiom of Choice there is a mapping $h\colon X\to A$ such that $g=fh$, which is clearly a morphism $h\colon  (X,=)\to (A,\rho)$. Conversely, if $(X,\rho)$ is projective in {\bf Preord}, consider the identity morphisms $\iota\colon (X,\rho)\to(X,\rho)$ and $\iota'\colon (X,=)\to(X,\rho)$. By the projectivity of $(X,\rho)$, there exists a morphism $\varphi\colon (X,\rho)\to(X,=)$ with $\iota'\varphi=\iota$. Then, necessarily, $\varphi$ is the identity mapping and, since $\varphi$ is a morphism, $\rho$ is the equality relation.

Also, all trivial objects $(X,=)$ are projective generators in {\bf Preord}, because if $f,f'\colon (A,\rho)\to (A',\rho')$ are morphisms and $f\ne g$, then there exists $a\in A$ with $f(a)\ne g(a)$. If $c_a\colon (X,=)\to (A,\rho)$ is the constant morphism equal to $a$, then $fc_a\ne gc_a$.}\end{remark}

\begin{definition}{\rm
Let $f\colon A\to A'$ be a morphism in $\pre$. A \emph{precokernel} of $f$ is a morphism $p\colon A'\to X$ such that:
\begin{enumerate}
	\item $pf$ is a trivial map.
	\item Whenever $\lambda\colon A'\to Y$ is a morphism such that $\lambda f$ is trivial, then there exists a unique morphism $\lambda_1\colon X\to Y$ with $\lambda=\lambda_1 p$.
\end{enumerate}}
\end{definition}

If $f\colon (A,\rho)\to (A',\rho')$ is a morphism in {\bf Preord}, consider the canonical projection $c\colon (A',\rho')\to (A'/\zeta_f,\rho'\vee\zeta_f)$, where $\zeta_f$ is the equivalence relation on $A'$ generated by the set $\{\,(f(a_1),f(a_2))\mid a_1,a_2\in A,\ a_1\rho a_2\,\}$ and $\vee$ is the least upper bound in the complete lattice of all preorders on the set $A'$ (that is, $\rho'\vee \zeta_f$ is the preorder on $A'$ generated by $\rho'$ and $\zeta_f$). Thus $\zeta_f$ is the transitive closure of the relation $\{\,(a',a')\mid a'\in A'\,\}\cup\{\,(f(a_1),f(a_2))\mid a_1,a_2\in A,\ a_1\sim_{\rho} a_2\,\}$. Since the equivalence relation $\zeta_f$ is contained in the preorder $\rho'\vee \zeta_f$, it is possible to construct the quotient preordered set (Example \ref{quotient-preordered-set}), and thus $\rho'\vee \zeta_f$ induces a preorder on the quotient set $A'/\zeta_f$ (which will be still denoted by $\rho'\vee \zeta_f$, with a small abuse of notation).

\begin{proposition}\label{precoker-univ} Let $f\colon (A,\rho)\to (A',\rho')$ be a morphism in $\pre$. Then the canonical projection $c\colon (A',\rho')\to (A'/\zeta_f,\rho'\vee\zeta_f)$  is a precokernel of $f$. \end{proposition}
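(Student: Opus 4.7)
The plan is to verify the two defining clauses in the definition of precokernel directly from the construction of $\zeta_f$ and the join $\rho'\vee\zeta_f$.

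First, I would show that $cf$ is a trivial morphism. By Lemma 2.3 this amounts to checking that $a_1\rho a_2$ implies $cf(a_1)=cf(a_2)$. But by the very definition of $\zeta_f$, the pair $(f(a_1),f(a_2))$ lies in $\zeta_f$ whenever $a_1\rho a_2$, hence $[f(a_1)]_{\zeta_f}=[f(a_2)]_{\zeta_f}$, as required.

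Next, for the universal property, I would take a morphism $\lambda\colon(A',\rho')\to(Y,\tau)$ in $\pre$ such that $\lambda f$ is trivial, and define $\lambda_1\colon A'/\zeta_f\to Y$ on the set level by $\lambda_1([a']_{\zeta_f}):=\lambda(a')$. Well-definedness is where the hypothesis $\lambda f$ trivial is used: if $a'\zeta_f b'$, then there is a chain $a'=c_0,c_1,\dots,c_n=b'$ in $A'$ such that for each $i$ either $c_{i-1}=c_i$ or $\{c_{i-1},c_i\}=\{f(a_1),f(a_2)\}$ for some $a_1,a_2\in A$ with $a_1\rho a_2$; in the latter case triviality of $\lambda f$ gives $\lambda(c_{i-1})=\lambda f(a_1)=\lambda f(a_2)=\lambda(c_i)$, and composing the equalities along the chain yields $\lambda(a')=\lambda(b')$. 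Uniqueness of $\lambda_1$ as a set map satisfying $\lambda=\lambda_1 c$ is then immediate from the surjectivity of $c$.

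The main point, and the step where I expect the real work to be, is checking that $\lambda_1$ is a morphism in $\pre$ with respect to the induced preorder $\rho'\vee\zeta_f$ on $A'/\zeta_f$. For this I would exploit the description of $\rho'\vee\zeta_f$ as the transitive closure of the reflexive relation $\rho'\cup\zeta_f$: if $[a']_{\zeta_f}\,(\rho'\vee\zeta_f)\,[b']_{\zeta_f}$ then $a'(\rho'\vee\zeta_f)b'$ in $A'$, so there exists a chain $a'=d_0,d_1,\dots,d_m=b'$ whose consecutive elements are related either by $\rho'$ or by $\zeta_f$. On each $\rho'$-step we get $\lambda(d_{i-1})\tau\lambda(d_i)$ because $\lambda$ is a morphism, while on each $\zeta_f$-step we get the equality $\lambda(d_{i-1})=\lambda(d_i)$ by the well-definedness argument above, hence $\lambda(d_{i-1})\tau\lambda(d_i)$ by reflexivity of $\tau$. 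Transitivity of $\tau$ then yields $\lambda(a')\tau\lambda(b')$, i.e., $\lambda_1([a']_{\zeta_f})\tau\lambda_1([b']_{\zeta_f})$, completing the verification.
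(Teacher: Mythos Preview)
Your proof is correct and follows essentially the same route as the paper: both verify that $cf$ is trivial directly from the definition of $\zeta_f$, then construct $\lambda_1$ set-theoretically and check it is a morphism via a chain argument through $\rho'\vee\zeta_f$. The only cosmetic difference is that the paper compresses your well-definedness chain into the single observation $\zeta_f\subseteq{\sim_\lambda}$ (since $\sim_\lambda$ is an equivalence relation containing the generators of $\zeta_f$), which is exactly what your step-by-step argument proves.
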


\begin{proof} We must prove that $cf$ is a trivial morphism and that, for every morphism $g\colon (A',\rho')\to (B,\sigma)$  with $gf$ a trivial morphism, there exists a unique morphism $g'\colon (A'/\zeta_f,\rho'\vee\zeta_f)\to  (B,\sigma)$ such that $g=g'c$. If $x,y\in A$ and $x\rho y$, then $f(x)\zeta_f f(y)$ by the definition of $\zeta_f$. So $cf(x)=cf(y)$. This shows that $cf$ is a trivial morphism. Now let $g\colon (A',\rho')\to (B,\sigma)$ be a morphism in {\bf Preord} with $gf$ a trivial morphism. If $x,y\in A$ and $x\rho y$, we get that $gf(x)=gf(y)$. It follows that $\zeta_f\!\subseteq\,\sim_g$. This shows that the position $[a']_{\zeta_f}\mapsto g(a)$ defines a mapping $g'\colon A'/\zeta_f\to B$, and clearly $g$ factors uniquely through the canonical projection $c$ and the mapping $g'$. The mapping $g'$ is a morphism $(A'/\zeta_f,\rho'\vee\zeta_f)\to  (B,\sigma)$ in {\bf Preord}. To see this, consider elements $[a]_{\zeta_f},[\alpha]_{\zeta_f}$ such  that $a(\zeta_f\vee \rho')\alpha$. By definition, there are elements $x_0:=a,x_1,\ldots,x_n:=\alpha\in A'$ such that, for every $0\leq i\leq n-1$, either $x_i\zeta_f x_{i+1}$ or $x_i\rho' x_{i+1}$.  In the first case, since $\zeta_f\subseteq \sim_g$, it follows that $g'(x_i)=g'(x_{i+1})$. In the second case, since $g$ is a morphism, $g(x_i)\sigma g(x_{i+1})$. Thus, in both cases, we have $g'([x_i]_{\zeta_f})=g(x_i)\sigma g(x_{i+1})=g'([x_{i+1}]_{\zeta_f})$. Hence $g'([a]_{\zeta_f})\sigma g'([\alpha]_{\zeta_f})$, because $\sigma$ is transitive.\end{proof}

It is not difficult to prove the next result directly, but it will also be an immediate consequence of our more general Proposition \ref{precokernel-properties'}.

\begin{proposition}\label{precokernel-properties} 
Let $f\colon A\to A'$ be a morphism in $\pre$. 
\begin{enumerate}
	\item Every precokernel of $f$ is an epimorphism. 
	\item If $p\colon A'\to X$, $q\colon A'\to Y$ are cokernels of $f$, then there exists a unique isomorphism $\varphi\colon X\to Y$ such that $q=\varphi p$. 
\end{enumerate}
\end{proposition}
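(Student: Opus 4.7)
The plan is to run the standard universal-property arguments, dual to those for prekernels in Proposition~\ref{prekernel-properties}. The one non-formal input needed is the observation that the composition of any morphism with a trivial morphism is again trivial: if $h=\beta\alpha$ factors through a trivial object $T$, then for any composable $u$ and $v$ the maps $uh=(u\beta)\alpha$ and $hv=\beta(\alpha v)$ still factor through $T$.

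For (1), I would let $p\colon A'\to X$ be a precokernel of $f$ and take morphisms $u,v\colon X\to Z$ in $\pre$ with $up=vp$. Setting $\lambda:=up=vp$, the map $\lambda f=u(pf)$ is trivial because $pf$ is trivial. Hence $\lambda\colon A'\to Z$ falls under the defining universal property of the precokernel $p$, which yields a \emph{unique} $\lambda_1\colon X\to Z$ with $\lambda=\lambda_1 p$. Since both $u$ and $v$ qualify as such a $\lambda_1$, we conclude $u=v$, so $p$ is an epimorphism.

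For (2), applying the universal property of $p$ to the morphism $q\colon A'\to Y$ (whose composite $qf$ is trivial since $q$ is a precokernel of $f$) produces a unique $\varphi\colon X\to Y$ with $q=\varphi p$; symmetrically, the universal property of $q$ produces a unique $\psi\colon Y\to X$ with $p=\psi q$. Then $(\psi\varphi)p=\psi q=p$, and the same uniqueness clause of the universal property of $p$ applied to $\lambda:=p$ itself (note $pf$ is trivial) shows that $\psi\varphi$ coincides with the identity $\iota_X$; symmetrically $\varphi\psi=\iota_Y$. Thus $\varphi$ is an isomorphism, and its uniqueness among morphisms satisfying $q=\varphi p$ is built into its construction. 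There is no genuine obstacle here: the whole argument is the formal dual of Proposition~\ref{prekernel-properties}, and the only point needing verification is the triviality-preservation under composition noted at the outset.
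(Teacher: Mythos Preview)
Your proof is correct and matches the paper's approach: the paper does not prove this proposition directly but defers to the general $\mathcal Z$-precokernel version (Proposition~\ref{precokernel-properties'}), which in turn is obtained by dualizing Proposition~\ref{prekernel-properties'}; the proof given there for prekernels is exactly the dual of what you wrote, including the use of the uniqueness clause to identify the two candidate factorizations in part~(1) and the standard mutual-inverse argument in part~(2).
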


\begin{proposition}\label{coker-epi}
Let $f\colon (A,\rho)\to (B,\sigma)$ be a morphism in $\pre$ and let $\pi\colon (B,\sigma)\to (B/\zeta_f,\sigma\vee \zeta_f)$ be its precokernel. Then
\begin{enumerate}
	\item $\zeta_f\subseteq{} \equiv_\sigma$, where $\equiv_\sigma$ denotes the equivalence relation generated by $\sigma$. 
	\item If $C\subseteq B$ is a clopen set, then $\pi(C)\subseteq B/\zeta_f$ is a clopen set. 
	\item $\underline{\pi}$ is an epimorphism in $\underline{\pre}$. 
\end{enumerate}
\end{proposition}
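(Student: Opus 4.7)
For (1), I would use that $\zeta_f$ is the equivalence relation on $B$ generated by the pairs $(f(a_1),f(a_2))$ with $a_1\,\rho\,a_2$. Since $f\colon(A,\rho)\to(B,\sigma)$ is a morphism in $\pre$, each such pair lies in $\sigma$, hence in $\equiv_\sigma$. As $\equiv_\sigma$ is itself an equivalence relation on $B$ containing these generators, it contains the equivalence relation they generate, giving $\zeta_f\subseteq{}\equiv_\sigma$.

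For (2), I would fix a clopen $C\subseteq(B,\sigma)$. By Lemma~\ref{equivalence-classes}(3), $C$ is a union of $\equiv_\sigma$-classes, so by (1) also a union of $\zeta_f$-classes; in particular $\pi^{-1}(\pi(C))=C$. To verify that $\pi(C)$ is clopen in $(B/\zeta_f,\sigma\vee\zeta_f)$, I would suppose $[c]\mathrel{(\sigma\vee\zeta_f)}[b]$ with $c\in C$ and $b\in B\setminus C$; lifting to $B$ produces a chain $c=z_0,z_1,\dots,z_n=b$ in which each consecutive pair is related by $\sigma$ or by $\zeta_f$. An induction shows every $z_i\in C$: a $\sigma$-step preserves $C$ because $C$ is clopen in $(B,\sigma)$, while a $\zeta_f$-step preserves $C$ because $C$ is a union of $\zeta_f$-classes. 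Hence $b\in C$, a contradiction; the opposite direction $[b]\mathrel{(\sigma\vee\zeta_f)}[c]$ is handled symmetrically.

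For (3), I would mimic the proof of Remark~\ref{proj-epi-stable}(2). Suppose $g,h\colon(B/\zeta_f,\sigma\vee\zeta_f)\to(T,\tau)$ satisfy $\underline{g\pi}=\underline{h\pi}$ in $\underline{\pre}$, and pick a clopen $X\subseteq(B,\sigma)$ such that $(g\pi)|_X,(h\pi)|_X$ are trivial and $(g\pi)|_{B\setminus X}=(h\pi)|_{B\setminus X}$. Setting $Y:=\pi(X)$, by (2) $Y$ is clopen in $(B/\zeta_f,\sigma\vee\zeta_f)$; since $X$ is a union of $\zeta_f$-classes, $\pi^{-1}(Y)=X$ and therefore $\pi(B\setminus X)=(B/\zeta_f)\setminus Y$. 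A chain argument parallel to that in (2) shows that $g|_Y$ and $h|_Y$ are trivial---any $\sigma\vee\zeta_f$-related pair in $Y$ lifts to a chain inside $X$ on which $g\pi$ is trivial---while the equality of $g\pi$ and $h\pi$ on $B\setminus X$ transfers immediately to $g=h$ on $(B/\zeta_f)\setminus Y$. Thus $g\,R_{B/\zeta_f,T}\,h$, so $\underline{g}=\underline{h}$.

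The main obstacle will be in (2): confining a $\sigma\vee\zeta_f$-chain to a $\sigma$-clopen set. Once (1) lets each $\zeta_f$-step be absorbed into a single $\equiv_\sigma$-class, the induction is routine, and (3) then reduces to bookkeeping modeled on Remark~\ref{proj-epi-stable}(2).
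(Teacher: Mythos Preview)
Your proof is correct and follows essentially the same approach as the paper. In (2) you streamline the handling of $\zeta_f$-steps by first invoking Lemma~\ref{equivalence-classes}(3) to see that a clopen $C$ is already a union of $\zeta_f$-classes, whereas the paper instead expands each $\zeta_f$-step into an $\equiv_\sigma$-chain and walks through it using clopenness; in (3) you spell out the chain argument needed to pass from triviality of $(g\pi)|_X$ with respect to $\sigma$ to triviality of $g|_Y$ with respect to $\sigma\vee\zeta_f$, a detail the paper leaves implicit in its appeal to Remark~\ref{proj-epi-stable}(2).
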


\begin{proof} (1) By definition, $\zeta_f$ is generated by 
	$$
	G:=\{(f(a_1),f(a_2))\mid a_1,a_2\in A, a_1\rho a_2 \}
	$$
and thus $G\subseteq \sigma\subseteq\,\equiv_\sigma$, because $f$ is a morphism in $\pre$. This allows to conclude. 
	
	(2) Set $Q:=B/\zeta_f$ and consider elements $\eta\in \pi(C)$ and $\lambda\in  Q\setminus \pi(C)$. Then $\eta=\pi(c)$ and $\lambda=\pi(b)$ for some $c\in C$, $b\in B\setminus C$. If $\eta (\sigma\vee \zeta_f)\lambda$, by definition $c(\sigma\vee \zeta_f)b$. Thus there are elements $x_0:=c,x_1,\ldots, x_n=b\in B$ such that, for $0\leq i< n$, either $x_i\sigma x_{i+1}$ or $x_i\zeta_f x_{i+1}$. For $i=0$ we have either $c \sigma x_1$ or $c\zeta_f x_1$. In the first case, we infer $x_1 \in C$, since $C$ is clopen and $c\in C$. In the second case, there are elements $y_0:=c,y_1,\ldots, y_m:=x_1\in B$ such that, for $0\leq j< m$, either $y_j\sigma y_{j+1}$ or $y_{j+1}\sigma y_j$. In particular, since $c\sigma y_1$ or $y_1\sigma c$ and $C$ is clopen, it follows that $y_1\in C$. By induction on $j$, we have $y_m=x_1\in C$. This proves that in both cases $c\sigma x_1$ and $c\zeta_f x_1$, we have $x_1\in C$. By induction on $i$, we get $x_n=b\in C$, a contradiction. 
	
	(3) Argue as in Remark \ref{proj-epi-stable}(2), noting that if $C$ is a clopen subset of $B$, then  $\pi(B\setminus C)=Q\setminus \pi(C)$.
\end{proof}

\begin{proposition}\label{coker-stable}
Let $f(A,\rho)\to (B,\sigma)$ be a morphism in $\pre$ and let $$\pi\colon (B,\sigma)\to (B/\zeta_f, \sigma\vee \zeta_f)$$ be a precokernel of $f$. Then $\underline{\pi}$ is a cokernel of $\underline{ f}$ in $\underline{\pre}$. 
\end{proposition}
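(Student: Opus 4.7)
The plan is to verify directly the universal property defining the kernel's dual in a pointed category: since $\underline{\pre}$ has a zero object, the cokernel of $\underline{f}$ is a morphism $\underline{q}$ with $\underline{q}\,\underline{f}=\underline{0}$ which is universal with respect to this property. I would first observe that $\underline{\pi}\,\underline{f}=\underline{\pi f}=\underline{0}$, because $\pi f$ is a trivial morphism by Proposition~\ref{precoker-univ}, and, as recalled in the excerpt, a morphism $g$ becomes the zero morphism in $\underline{\pre}$ exactly when $g$ is trivial.

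Next, suppose $\underline{g}\colon \underline{B}\to \underline{Y}$ is any morphism in $\underline{\pre}$ satisfying $\underline{g}\,\underline{f}=\underline{0}$. Pick a representative $g\colon (B,\sigma)\to (Y,\tau)$ in $\pre$; then $\underline{gf}=\underline{g}\,\underline{f}=\underline{0}$, hence $gf$ is a trivial morphism in $\pre$. Applying the universal property of the precokernel $\pi$ (Proposition~\ref{precoker-univ}), there exists a (unique) morphism $h\colon (B/\zeta_f,\sigma\vee\zeta_f)\to (Y,\tau)$ in $\pre$ such that $g=h\pi$. Passing to the quotient category yields $\underline{g}=\underline{h}\,\underline{\pi}$, which gives existence of a factorization through $\underline{\pi}$.

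For uniqueness in $\underline{\pre}$, suppose $\underline{h'}\colon \underline{B/\zeta_f}\to \underline{Y}$ also satisfies $\underline{g}=\underline{h'}\,\underline{\pi}$. Then $\underline{h}\,\underline{\pi}=\underline{h'}\,\underline{\pi}$, and since $\underline{\pi}$ is an epimorphism in $\underline{\pre}$ by Proposition~\ref{coker-epi}(3), we conclude $\underline{h}=\underline{h'}$. This establishes that $\underline{\pi}$ has the universal property of the cokernel of $\underline{f}$ in $\underline{\pre}$.

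The only mildly delicate point is the independence of the factorization from the choice of representative $g$ of $\underline{g}$: a different representative $g'$ with $g\,R_{B,Y}\,g'$ produces a potentially different $h'$ in $\pre$, but the final uniqueness of $\underline{h'}$ in $\underline{\pre}$ is ensured precisely by the fact that $\underline{\pi}$ is epi, so no separate verification is needed. The main conceptual content is therefore just the combination of the $\pre$-level universal property of $\pi$ (Proposition~\ref{precoker-univ}) with the stable-category epi-ness of $\underline{\pi}$ (Proposition~\ref{coker-epi}(3)).
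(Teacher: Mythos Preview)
Your argument is correct and follows essentially the same route as the paper's proof: both verify $\underline{\pi f}=0$ via the triviality of $\pi f$, lift a morphism $\underline{g}$ with $\underline{g}\,\underline{f}=0$ to a representative $g$ with $gf$ trivial, factor through $\pi$ using the precokernel universal property in $\pre$, and then deduce uniqueness in $\underline{\pre}$ from the fact that $\underline{\pi}$ is an epimorphism there (Proposition~\ref{coker-epi}(3)). Your additional remark on independence from the choice of representative is a nice clarification but does not depart from the paper's strategy.
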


\begin{proof}
By Proposition \ref{precoker-univ}, $\pi f$ is trivial, that is, $\underline{\pi f}=0$ in $\underline{\pre}$. Now, let $g\colon (B,\sigma)\to (C,\tau)$ be a morphism in $\pre$ such that $\underline{gf}=0$ in $\underline{\pre}$, that is, $gf$ is trivial. By Proposition \ref{precoker-univ} again, there exists a unique morphism $g'\colon (B/\zeta_f, \sigma\vee \zeta_f)\to (C,\tau)$ in $\pre$ such that $g=g'\pi$, so $\underline{g}=\underline{g'\pi}$. The uniqueness of such a $\underline{g'}$ in $\underline{\pre}$ follows from the fact that $\underline{\pi}$ is an epimorphism in $\underline{\pre}$, in view of Propositions \ref{precoker-univ}, \ref{precokernel-properties}(1) and \ref{coker-epi}(3). 
\end{proof}

\begin{definition}\label{short-preexact} {\rm Let $f\colon X\to Y$ and $g\colon Y\to Z$ be morphisms in $\pre$. We say that $\xymatrix{
	X \ar[r]^f &  Y \ar[r]^g &  Z}$ is a \emph{short preexact sequence} in $\pre$ if $f$ is a prekernel of $g$ and $g$ is a precokernel of $f$.}
\end{definition}

\begin{example}
{\rm Let $A$ be any non-empty set, let $\rho$ be a preorder on $A$ and let $\sim$ be an equivalence relation on $A$ such that $\sim{}\subseteq{} \rho$. By Example \ref{quotient-preordered-set}, $\rho$ induces a preorder on the quotient set $A/\!\sim$, which we also denote by $\rho$. Then $$\xymatrix{
	(A,\sim) \ar[r]^k &  (A,\rho) \ar[r]^{\pi\ \ \ \ }  &  (A/\!\sim,\rho)}$$ is a short preexact sequence in $\pre$, where $k$ is the identity map and $\pi$ is the canonical projection. The equalities $\sim_\pi\cap{\,} \rho={\!}\sim \cap{\,} \rho={}\sim$ and Proposition \ref{prekernel} imply that $k$ is a prekernel of $\pi$. Moreover, by definition $\zeta_k={\!}\sim$ and $\rho\vee \zeta_k=\rho{\,}\vee \sim{\!}=\rho$, hence $\pi$ is a precokernel of $k$ by Proposition \ref{precoker-univ}. 

In particular, if $\simeq_\rho$ is the equivalence relation on $A$ defined by $a\simeq_\rho b$ if $a\rho b $ and $b\rho a$ (so that in particular we have $\simeq_\rho\subseteq \rho$) and $\leq_\rho$ is the partial order on $A/\!\simeq_\rho$ induced by $\rho$ (see Proposition \ref{preorder}), then 
$$\xymatrix{
	(A,\simeq_\rho) \ar[r]^k &  (A,\rho) \ar[r]^{\pi\ \ \ \ \ \ }&  (A/\!\simeq_\rho,\leq_\rho)}$$
is a short preexact sequence in $\pre$.}
\end{example}

\begin{example}\label{seq-indotta-da-morfismo}
{\rm Let $f\colon (A,\rho)\to B$ be a morphism in $\pre$. Consider the canonical prekernel $k\colon (A,\rho{\,\,} \cap \sim_f)\to (A,\rho)$ of $f$ (Proposition \ref{prekernel}). Let $\pi\colon(A,\rho)\to (A/\zeta_k, \rho\vee \zeta_k)$ be the canonical precokernel of $k$, according to Proposition \ref{precoker-univ}. Then $$\xymatrix{
	(A,\rho{\,\,} \cap\sim_f) \ar[r]^{\ \ \ k} &  (A,\rho) \ar[r]^{\pi\ \ \ \ \ \ }  &  (A/\zeta_k,\rho\vee \zeta_k)}$$
is a short preexact sequence in $\pre$. To prove it, we only need to show that $k$ is a prekernel of $\pi$. This  follows from the equalities $\rho\, \cap \sim_\pi=\rho\,\cap \zeta_k=\rho\,\cap \sim_f$ and Proposition \ref{prekernel}.   }
\end{example}

In the next proposition, we determine all short preexact sequences in $\pre$, up to isomorphism. 

\begin{proposition}\label{preexact-characterization}
Let $\xymatrix{
	(X,\rho) \ar[r]^f &  (Y,\sigma) \ar[r]^g &  (Z,\tau)}$ be any short preexact sequence in $\pre$. Then there exists a commutative diagram 
\begin{equation}
\xymatrix{
	(X,\rho) \ar[r]^{f} \ar[d]_{\cong}&  (Y,\sigma) \ar[r]^{g} \ar@2{-}[d]&  (Z,\tau) \ar[d]^{\cong}\\
	(Y,\sigma\cap \sim_g) \ar[r]_{\ \ \ \ k} &  (Y,\sigma) \ar[r]_{\pi\ \ \ \ \ \ }  &  (Y/\zeta_k, \zeta_k\vee \sigma)},\label{ccc'}\tag{$*$}
\end{equation}
where $k$ is the identity map, $\pi$ is the canonical projection and $$\xymatrix{(Y,\sigma\,\cap \sim_g) \ar[r]^{\ \ \ \ k} &  (Y,\sigma) \ar[r]^{\pi\ \ \ \ \ \ } &  (Y/\zeta_k, \zeta_k\vee \sigma)}$$ is a short preexact sequence. 
\end{proposition}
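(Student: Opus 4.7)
The plan is to identify the given short preexact sequence with the canonical one produced in Example \ref{seq-indotta-da-morfismo}, by exploiting the uniqueness-up-to-isomorphism statements for prekernels and precokernels (Propositions \ref{prekernel-properties} and \ref{precokernel-properties}).

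First, I would note that by Proposition \ref{prekernel} the map $k\colon(Y,\sigma\cap\sim_g)\to(Y,\sigma)$ is a prekernel of $g$. Since $f$ is also a prekernel of $g$ by hypothesis, Proposition \ref{prekernel-properties}(2) provides a unique isomorphism $\varphi\colon(X,\rho)\to(Y,\sigma\cap\sim_g)$ in $\pre$ with $f=k\varphi$. This gives commutativity of the left-hand square in \eqref{ccc'} with the desired isomorphism as left vertical arrow.

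The next (and main) step is to show that $g$ is itself a precokernel of $k$, so that one can then compare $g$ with the canonical precokernel $\pi\colon(Y,\sigma)\to(Y/\zeta_k,\sigma\vee\zeta_k)$ given by Proposition \ref{precoker-univ}. The point is that precomposition with the isomorphism $\varphi$ preserves and reflects triviality: a morphism $\mu$ factors through a trivial object if and only if $\mu\varphi$ does, since $\mu=(\mu\varphi)\varphi^{-1}$. Hence, for any morphism $\lambda\colon(Y,\sigma)\to(W,\omega)$, the composition $\lambda k$ is trivial if and only if $\lambda f=\lambda k\varphi$ is trivial; and any factorization $\lambda=\lambda_1 g$ against the universal property of $g$ as a precokernel of $f$ is automatically a factorization against the universal property for $k$. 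Thus $g$ is a precokernel of $k$. Applying Proposition \ref{precokernel-properties}(2) to the two precokernels $g$ and $\pi$ of $k$ yields a unique isomorphism $\psi\colon(Z,\tau)\to(Y/\zeta_k,\sigma\vee\zeta_k)$ with $\pi=\psi g$, which gives commutativity of the right-hand square with the required isomorphism as right vertical arrow.

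Finally, the bottom row of \eqref{ccc'} is a short preexact sequence by Example \ref{seq-indotta-da-morfismo} applied to the morphism $g\colon(Y,\sigma)\to(Z,\tau)$. Assembling these pieces yields the commutative diagram with the prescribed isomorphisms. The only nontrivial step is the bridging observation that $g$ is a precokernel of $k$; once that is granted, everything else is a direct invocation of the uniqueness clauses in Propositions \ref{prekernel-properties} and \ref{precokernel-properties}.
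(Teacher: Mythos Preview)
Your argument is correct. The overall architecture matches the paper's: both proofs obtain the left isomorphism from uniqueness of prekernels (Proposition~\ref{prekernel-properties}(2)) and the right isomorphism from uniqueness of precokernels (Proposition~\ref{precokernel-properties}(2)), and both invoke Example~\ref{seq-indotta-da-morfismo} for the bottom row.

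The one genuine difference is in the bridging step. The paper verifies directly, at the level of elements, that $\pi$ is a precokernel of $f$: it checks $\pi f$ is trivial and then, given $\lambda$ with $\lambda f$ trivial, factors $\lambda$ through $g$ (using that $g$ is a precokernel of $f$) and then through $\pi$ via the inclusion $\zeta_k\subseteq{}\sim_g$. You instead argue that $g$ is a precokernel of $k$, by the purely formal observation that precomposition with the isomorphism $\varphi$ sets up a bijection between trivial composites $\lambda k$ and trivial composites $\lambda f$, so the universal property of $g$ relative to $f$ is literally the same as its universal property relative to $k$. Your route is shorter and avoids all element-level computation; the paper's route has the minor advantage of exhibiting explicitly the relation $\zeta_k\subseteq{}\sim_g$, which is reused later in Proposition~\ref{exact!}. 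Either way the right-hand square commutes by comparing two precokernels of the same morphism.
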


\begin{proof}
 By Proposition \ref{prekernel}, $k$ is a prekernel of $g$. Since, by assumption, $f$ is also a prekernel for $g$, we infer from Proposition \ref{prekernel-properties}(2) the existence of an isomorphism $(X,\rho)\to (Y,\sigma\cap \sim_g)$ which makes the square on the left in diagram (\ref{ccc'}) commute. Now $g$ is a precokernel of $f$, so that the existence of an isomorphism $(Z,\tau)\to (Y/\zeta_k,\zeta_k\vee \sigma)$ making the right square of the diagram commute will follow by showing that $\pi$ is a precokernel of $f$ (Proposition \ref{precokernel-properties}(2)). First, we prove that $\pi f$ is trivial. Fix elements $x,y\in X$ with $x\,\rho\, y$. Since $g$ is a precokernel of $f$, $gf$ is trivial and thus $g(f(x))=g(f(y))$, that is, $f(x)\sim_g f(y)$. Moreover $f(x)\sigma f(y)$, because $f$ is a morphism. Since, by definition, $\zeta_k$ is generated by $\{(y_1,y_2)\in Y\times Y\mid y_1\,(\sigma\cap\sim_g)\,y_2 \}$, it follows that $f(x)\,\zeta_k\, f(y)$, that is, $\pi f(x)=\pi f(y)$. This proves that $\pi f$ is trivial. Now, let $\lambda\colon (Y,\sigma)\to (T,\eta)$ be a morphism with $\lambda f$ trivial. We have to show that there exists a unique morphism $\widetilde{\lambda}\colon (Y/\zeta_k,\zeta_k\vee \sigma)\to (T,\eta)$ such that $\lambda=\widetilde{\lambda}\pi$. It is enough to show the existence of such a $\widetilde{\lambda}$, the uniqueness being a trivial consequence of the fact that $\pi$ is surjective. But, since $g$ is a precokernel of $f$ and $\lambda f$ is trivial, there exists a unique morphism $\lambda_1\colon (Z,\tau)\to (T,\eta)$ such that $\lambda=\lambda_1g$. Now $\zeta_k$ is the equivalence relation generated by $\sigma\,\cap \sim_g$ and $\sim_g$ is an equivalence relation, so $\zeta_k\subseteq \sim_g$. 
Thus $\lambda_1$ induces a well defined mapping $\widetilde{\lambda}\colon (Y/\zeta_k,\zeta_k\vee \sigma)\to (T,\eta)$, $[y]_{\zeta_k}\mapsto \lambda_1(g(y))$ and $\lambda_1$ clearly satisfies $\lambda=\widetilde{\lambda}\pi$. We claim that $\lambda_1$ is a morphism. In order to see this, take $[y]_{\zeta_k},[z]_{\zeta_k}\in Y/\zeta_k$ with $y(\sigma\vee \zeta_k) z$, and let $\beta_1:=y,\beta_2,\ldots, \beta_n:=z\in Y$ be such that, for every $1\leq i<n$, either $\beta_i\sigma\beta_{i+1}$ or $\beta_i\zeta_k\beta_{i+1}$. In the first case, we infer $g(\beta_i)\tau g(\beta_{i+1})$, and thus $\widetilde \lambda([\beta_i]_{\zeta_k})\eta \widetilde{\lambda}([\beta_{i+1}]_{\zeta_k})$, because $g,\lambda_1$ are morphisms. In the second case, we have, by definition, $[\beta_i]_{\zeta_k}=[\beta_{i+1}]_{\zeta_k}$. Thus in both cases we get $\widetilde \lambda([\beta_i]_{\zeta_k})\eta \widetilde{\lambda}([\beta_{i+1}]_{\zeta_k})$ for any $1\leq i<n$. Since $\eta$ is transitive, it follows that $\widetilde{\lambda}([y]_{\zeta_k})\eta \widetilde{\lambda}([z]_{\zeta_k})$, proving that $\widetilde{\lambda}$ is a morphism. 

Finally, $\xymatrix{(Y,\sigma\cap \sim_g) \ar[r]^{\ \ \ k} &  (Y,\sigma) \ar[r]^{\pi\ \ \ \ } &  (Y/\zeta_k, \zeta_k\vee \sigma)}$ is a short preexact sequence as we saw in Example~\ref{seq-indotta-da-morfismo}.\end{proof}

\section{Short exact sequences}

In a pointed category, a {\em short exact sequence} is a pair of morphisms $f\colon A\to B$, $g\colon B\to C$ such that $f$ is a kernel of $g$ and $g$ is a cokernel of $f$. As usual, the notation will be $\xymatrix{0 \ar[r]&
 A \ar[r]^f &  B \ar[r]^g &  C\ar[r]&0}$. We will now describe all short exact sequences in the pointed category \underline{{\bf Preord}} up to isomorphism (similarly to the description of any 
short exact sequence in the category $\Mod R$ of right modules over a ring $R$, which is isomorphic to a short exact sequence of the form $\xymatrix{0 \ar[r]&
 A_R \ar[r] &  B_R \ar[r] &  A/B\ar[r]&0}$ for suitable modules $A_R\le B_R$). 

\begin{proposition}\label{exact!} For every short exact sequence $$\xymatrix{0 \ar[r]&
 \underline{A} \ar[r]^{\underline{f}} &  \underline{(B,\rho)} \ar[r]^{\underline{g}} &  \underline{C}\ar[r]&0}$$ in \underline{{\bf Preord}}, there is a commutative diagram \begin{equation}\xymatrix{0 \ar[r]&
 \underline{A} \ar[r]^{\underline{f}} \ar[d]_{\cong}&  \underline{(B,\rho)} \ar[r]^{\underline{g}} \ar@2{-}[d]&  \underline{C}\ar[r]\ar[d]^{\cong}&0\\
 0 \ar[r]&
 \underline{{(B,\rho{\,\,}\cap\!\sim)}} \ar[r]_{\ \ \ \ \underline{k}}&  \underline{(B,\rho)} \ar[r]_{{\underline{\pi}}\ \ \ } &  \underline{(B/\!\sim,(\rho{\,}\vee\!\sim)')}\ar[r]&0},\label{ccc}\tag{$**$}\end{equation} where $\sim$ is an equivalence relation on $B$, $k$ is the identity and $\pi$ is the canonical projection.\end{proposition}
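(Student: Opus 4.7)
The plan is to lift the given short exact sequence in $\underline{\pre}$ to the canonical short preexact sequence in $\pre$ determined by the prekernel--precokernel construction applied to a representative of $\underline{g}$, and then to appeal to the uniqueness of kernels and cokernels in the pointed category $\underline{\pre}$ to produce the two vertical isomorphisms.

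First I would choose representatives $f\colon A\to (B,\rho)$ and $g\colon (B,\rho)\to C$ of $\underline{f}$ and $\underline{g}$ in $\pre$, and let $\sim$ denote the equivalence relation on $B$ generated by the preorder $\rho\,\cap\sim_g$. A short check shows that $\rho\,\cap\sim\,=\,\rho\,\cap\sim_g$: the inclusion $\supseteq$ is automatic, while $\subseteq$ follows from the fact that the generators of $\sim$ all sit inside $\sim_g$, so that $\sim\,\subseteq\,\sim_g$. By Proposition~\ref{prekernel}, the identity map $k\colon (B,\rho\,\cap\sim)\to (B,\rho)$ is then a prekernel of $g$; moreover, comparing with the construction of $\zeta_k$ preceding Proposition~\ref{precoker-univ}, one sees that $\zeta_k\,=\,\sim$, so that the canonical projection $\pi\colon (B,\rho)\to (B/\!\sim,(\rho\vee\sim)')$ is a precokernel of $k$.

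Next I would pass these two facts to the stable category. By Proposition~\ref{2.12}, $\underline{k}$ is a kernel of $\underline{g}$ in $\underline{\pre}$; since $\underline{f}$ is also a kernel of $\underline{g}$ by hypothesis, the standard uniqueness of kernels yields a unique isomorphism $\alpha\colon\underline{A}\to\underline{(B,\rho\,\cap\sim)}$ with $\underline{k}\circ\alpha=\underline{f}$, making the left-hand square of~\eqref{ccc} commute. By Proposition~\ref{coker-stable}, $\underline{\pi}$ is a cokernel of $\underline{k}$ in $\underline{\pre}$. Because $\underline{f}=\underline{k}\circ\alpha$ with $\alpha$ an isomorphism, the hypothesis that $\underline{g}$ is a cokernel of $\underline{f}$ is equivalent to the assertion that $\underline{g}$ is a cokernel of $\underline{k}$; the uniqueness of cokernels therefore produces an isomorphism $\beta\colon\underline{(B/\!\sim,(\rho\vee\sim)')}\to\underline{C}$ with $\beta\circ\underline{\pi}=\underline{g}$, making the right-hand square commute.

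The main subtlety, and essentially the only one, is the choice of $\sim$: one must take the equivalence relation generated by $\rho\,\cap\sim_g$ (equivalently, $\zeta_k$) rather than, say, $\sim_g$ itself, so that the canonical precokernel of $k$ furnished by Proposition~\ref{precoker-univ} genuinely lands in the object $(B/\!\sim,(\rho\vee\sim)')$ displayed in~\eqref{ccc}, while the prekernel domain $(B,\rho\,\cap\sim)$ still agrees with the intrinsic $(B,\rho\,\cap\sim_g)$. Once this identification is in place, the remainder is the formal uniqueness of kernels and cokernels available in any pointed category.
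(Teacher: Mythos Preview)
Your proof is correct and follows essentially the same approach as the paper: both lift to the canonical short preexact sequence $(B,\rho\cap\sim_g)\xrightarrow{k}(B,\rho)\xrightarrow{\pi}(B/\zeta_k,\zeta_k\vee\rho)$ attached to a representative $g$, identify $\sim$ with $\zeta_k$, and then use uniqueness of kernels and cokernels in $\underline{\pre}$ to produce the vertical isomorphisms.

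The one place where your argument differs is in the right-hand square. The paper verifies directly that $\underline{\pi}$ is a cokernel of $\underline{f}$ (showing $\pi f$ is trivial, then building a factorization $\widetilde{\lambda}$ through $\pi$ from the given factorization through $g$, and checking the relation $R$ on a suitable clopen set). You instead observe that, since $\underline{f}=\underline{k}\circ\alpha$ with $\alpha$ an isomorphism, a morphism is a cokernel of $\underline{f}$ if and only if it is a cokernel of $\underline{k}$; this lets you compare $\underline{g}$ and $\underline{\pi}$ as two cokernels of $\underline{k}$ and invoke uniqueness. Your route is a genuine streamlining: it replaces the paper's explicit construction of $\widetilde{\lambda}$ and the clopen-set bookkeeping by a one-line categorical fact valid in any pointed category.
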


\begin{proof} 
Let $f\colon A\to (B,\rho),\ g\colon(B,\rho)\to C$ be representatives of $\underline{f},\ \underline{g}$. Consider the sequence of morphisms $$\xymatrix{(B,\rho\,\cap \sim_g) \ar[r]^{\ \ \ \ k} &  (B,\rho) \ar[r]^{\pi\ \ \ \ } &  (B/\zeta_k, \zeta_k\vee \rho)}$$ in $\pre$, where $k$ is the identity and $\pi$ is the canonical projection. By Propositions \ref{prekernel} and \ref{precoker-univ}, $k$ is a prekernel of $g$ and $\pi$ is a precokernel of $k$. Moreover the pair of morphisms $k,\pi$ is a short preexact sequence in $\pre$ by Example \ref{seq-indotta-da-morfismo}. By Propositions \ref{2.12} and \ref{coker-stable}, the canonical image 
$$\xymatrix{\underline{(B,\rho{\,\,}\cap \sim_g)} \ar[r]^{\ \ \ \ \underline k} &  \underline{(B,\rho)} \ar[r]^{\underline{\pi}\ \ \ \ } &  \underline{(B/\zeta_k, \zeta_k\vee \rho)}}$$
is an exact sequence in the stable category $\underline{\pre}$. Since $\underline{f}, \underline{k}$ are kernels of $\underline g$, there is a unique isomorphism $\underline{A}\to \underline{(B,\rho \cap \sim_g)}$ in $\underline\pre$ which makes the square on the left of diagram (\ref{ccc}) commute. In order to get an isomorphism making the square on the right commute, it will suffice to show that $\underline{\pi}$ is a cokernel of $\underline f$. Let $\tau$ be the preorder on $A$ and let $a,b\in A$ be such that $a\tau b$. Since $\underline{f},\underline{g}$ form a short exact sequence, we have, by definition, $\underline{g}\underline{f}=0$, i.e., $gf$ is trivial. Thus $gf(a)=gf(b)$ and, since $f$ is a morphism, $f(a)\rho f(b)$, i.e., $f(a)(\rho\, \cap \sim_g)f(b)$. Since, by definition, $\zeta_k$ is the equivalence relation  generated by $\rho\,\cap \sim_g$, it follows $[f(a)]_{\zeta_k}=[f(b)]_{\zeta_k}$, that is, $\pi f(a)=\pi f(b)$, and this proves that $\pi f$ is trivial, i.e., $\underline{\pi f}=0$. Now take any morphism $\underline{\lambda}\colon\underline{B}\to \underline T$ in $\underline \pre$ satisfying $\underline \lambda \underline f=0$. Let $\lambda\colon B\to T$ be a representative of $\underline{ \lambda}$. By assumption $\underline{g}$ is a cokernel of $\underline{f}$, so there exists a unique morphism $\underline{\lambda_1}\colon\underline{C}\to \underline{ T}$ in $\underline{\pre}$ with $\underline{\lambda}=\underline{\lambda_1 g}$. Let $\lambda_1\colon C\to T$ be a representative of $\underline{\lambda_1}$. By definition, $\zeta_k\subseteq \sim_g$ and thus there is a well defined mapping $\widetilde{\lambda}\colon(B/\zeta_k,\zeta_k\vee \rho)\to T$ such that $\widetilde{\lambda}([y]_{\zeta_k})=\lambda_1g(y)$, for every $y\in B$. By the same argument given in the proof of Proposition \ref{preexact-characterization}, it is easily seen that $\widetilde{\lambda}$ is a morphism in $\pre$. We claim that the canonical image $\underline{\widetilde{\lambda}}$ of $\widetilde{\lambda}$ in $\underline{\pre}$ is such that $\underline{\lambda}=\underline{\widetilde{\lambda}}\underline{\pi}$ (and it is clearly the unique one with this property, because $\underline{\pi}$ is an epimorphism by Proposition \ref{coker-epi}(3)). Since $\underline{\lambda}=\underline{\lambda_1}\underline{g}$, there is a clopen subset $\Lambda$ of $(B,\rho)$ such that $\lambda=\lambda_1g$ on $\Lambda$ and both $\lambda$ and $\lambda_1g$ are trivial on $B\setminus\Lambda$. It easily follows that $\lambda=\widetilde{\lambda}\pi$ on $\Lambda$ and that $\lambda, \widetilde{\lambda}\pi$ are trivial on $B\setminus\Lambda$. This concludes the proof. 
\end{proof}

\begin{proposition} An identity morphism $k\colon (A,\sigma)\to(A,\rho)$ is a prekernel of a morphism in {\bf Preord} if and only if $\sigma=\rho{\,\,}\cap \equiv_\sigma$. Moreover, if these equivalent conditions hold, then $k$ is the prekernel of the canonical projection $\pi\colon (A,\rho)\to (A/\!\!\equiv_\sigma, \rho\vee\equiv_\sigma)$.\end{proposition}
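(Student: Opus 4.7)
The plan is to prove both implications by comparing the given $k$ with the canonical prekernel constructed in Proposition~\ref{prekernel}, extracting the ``moreover'' clause along the way. For the ``if'' direction, assume $\sigma=\rho\cap\equiv_\sigma$. In particular $\sigma\subseteq\rho$, so $k$ is a morphism in $\pre$, and since $\equiv_\sigma\subseteq\rho\vee\equiv_\sigma$ the canonical projection $\pi\colon (A,\rho)\to(A/\!\!\equiv_\sigma,\,\rho\vee\equiv_\sigma)$ is a well-defined morphism by Example~\ref{quotient-preordered-set}. By Proposition~\ref{prekernel}, a prekernel of $\pi$ is the identity $(A,\,\rho\cap\sim_\pi)\to(A,\rho)$; but a direct computation gives $\sim_\pi\,=\,\equiv_\sigma$ (two elements are identified by $\pi$ exactly when they lie in the same $\equiv_\sigma$-class), whence $\rho\cap\sim_\pi=\rho\cap\equiv_\sigma=\sigma$. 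So $k$ itself is this canonical prekernel of $\pi$, and the ``moreover'' statement is established simultaneously.

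For the ``only if'' direction, suppose $k\colon (A,\sigma)\to(A,\rho)$ is a prekernel of some morphism $f\colon (A,\rho)\to(B,\tau)$. By Proposition~\ref{prekernel}, the identity $k'\colon (A,\,\rho\cap\sim_f)\to(A,\rho)$ is also a prekernel of $f$, and Proposition~\ref{prekernel-properties}(2) supplies a unique isomorphism $\lambda'\colon (A,\sigma)\to(A,\,\rho\cap\sim_f)$ in $\pre$ with $k=k'\lambda'$. Since $k$ and $k'$ are both the identity on the underlying set, $\lambda'$ is forced to be the identity as a set mapping; being an isomorphism in $\pre$, it must match the two relations, yielding $\sigma=\rho\cap\sim_f$.

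It remains to upgrade $\sim_f$ to $\equiv_\sigma$, which is the only mildly delicate step. Since $\sigma=\rho\cap\sim_f\subseteq\sim_f$ and $\sim_f$ is already an equivalence relation, the minimality of $\equiv_\sigma$ gives $\equiv_\sigma\subseteq\sim_f$, hence $\rho\cap\equiv_\sigma\subseteq\rho\cap\sim_f=\sigma$. Conversely $\sigma\subseteq\rho$ and $\sigma\subseteq\equiv_\sigma$, so $\sigma\subseteq\rho\cap\equiv_\sigma$, completing the equivalence.
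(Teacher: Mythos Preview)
Your proof is correct and follows essentially the same approach as the paper's: both directions hinge on identifying $\sigma$ with $\rho\cap\sim_f$ via the canonical prekernel of Proposition~\ref{prekernel}, then squeezing $\equiv_\sigma$ between $\sigma$ and $\sim_f$ to replace $\sim_f$ by $\equiv_\sigma$. You are slightly more explicit than the paper in invoking Proposition~\ref{prekernel-properties}(2) to conclude $\sigma=\rho\cap\sim_f$ (the paper simply asserts this identification), and in spelling out the ``if'' direction and the moreover clause, which the paper dismisses as trivial once $\sim_\pi=\,\equiv_\sigma$ is noted.
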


\begin{proof} If $k\colon (A,\sigma)\to(A,\rho)$ is a kernel of a morphism $f\colon (A,\rho)\to (A',\rho')$ in {\bf Preord}, then $k$ is the morphism $k\colon (A,\rho{\,\,}\cap\!\sim_f)\to (A,\rho)$, where $\sim_f$ is the equivalence relation on $A$ defined, for every $x,y\in A$, by $x\sim_fy$ if $f(x)=f(y)$ and $k$ is the identity mapping.  Thus $\sigma=\rho{\,\,}\cap\!\sim_f$. We must prove that $\sigma=\rho\,\cap \equiv_\sigma$. Now $\sigma=\rho{\,\,}\cap\!\sim_f$ implies that $\sigma\subseteq \rho$ and that $\sigma$ is contained in the equivalence relation $\sim_f$. Hence the equivalence relation $\equiv_\sigma$ generated by $\sigma$ is contained in $\sim_f$. Therefore $\rho\,\cap \equiv_\sigma{}\subseteq \rho\,\cap \sim_f{}=\sigma$. Conversely, $\sigma\subseteq\rho$ because the identity $k\colon (A,\sigma)\to(A,\rho)$ is a morphism in {\bf Preord}, and $\sigma\subseteq\,\equiv_\sigma$ trivially. This concludes the proof of one of the implications of the first part of the statement. The rest follows trivially considering the canonical projection $\pi\colon (A,\rho)\to (A/\!\!\equiv_\sigma, \rho\vee\equiv_\sigma)$ and noticing that $\sim_\pi\,=\,\equiv_\sigma$.\end{proof}

  \section{Pretorsion theories}
  
  Now we will present a general setting for the results proved in the previous sections. (Pre)torsion theories in general categories are studied in \cite{GJ}, \cite{GJM} and \cite{JT}. These papers are rather technical, and we will only use a tiny part of the results presented there. Hence, in this section, we give an elementary presentation of the results we need. We are grateful to Marino Gran, Marco Grandis and Sandra Mantovani for some useful suggestions. 
  
  \medskip
    
  Fix an arbitrary category $\Cal C$ and a non-empty class $\Cal Z$ of objects of $\Cal C$. For every pair $A,A'$ of objects of $\Cal C$, we will write $\Triv_{\Cal Z}(A, B)$ for the set of  all morphisms in $\Cal C$ that factors though an object of $\Cal Z$. We will call these morphisms $\Cal Z$-trival.

Let $f\colon A\to A'$ be a morphism in $\Cal C$. We say that a morphism $k\colon X\to A$ in $\Cal C $ is a \emph{$\Cal Z$-prekernel} of $f$ if the following properties are satisfied: 
\begin{enumerate}
	\item $fk$ is a $\Cal Z$-trivial morphism.
	\item Whenever $\lambda \colon Y\to A$ is a morphism in $\Cal C$ and $f\lambda$ is $\Cal Z$-trivial, then there exists a unique morphism $\lambda'\colon Y\to X$ in $\Cal C$ such that $\lambda=k\lambda'$. 
\end{enumerate}

\begin{proposition}\label{prekernel-properties'}
Let $f\colon A\to A'$ be a morphism in $\Cal C$ and let $\mu\colon X\to A$ be a $\Cal Z$-prekernel for $f$. The following properties hold. 
\begin{enumerate}
	\item $\mu$ is a monomorphism. 
	\item If $\lambda\colon Y\to A$ is any other $\Cal Z$-prekernel of $f$, then there exists a unique isomorphism $\lambda'\colon Y\to X$ such that $\lambda=\mu\lambda'$. 
\end{enumerate}
\end{proposition}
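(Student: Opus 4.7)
The plan is a standard universal-property argument, transported to the $\Cal Z$-trivial setting. The only slightly delicate point is to check that whenever a composite $f\lambda$ factors through an object of $\Cal Z$, the same is true for precompositions of $\lambda$; this is automatic, because if $f\lambda = h\circ g$ with $g\colon Y\to Z$, $h\colon Z\to A'$ and $Z\in\Cal Z$, then for any $\alpha\colon W\to Y$ we have $f\lambda\alpha = h(g\alpha)$ still factors through $Z$.

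For (1), I would show $\mu$ is monic directly from its universal property. Suppose $\alpha,\beta\colon Y\to X$ satisfy $\mu\alpha=\mu\beta$, and set $\lambda:=\mu\alpha=\mu\beta\colon Y\to A$. Since $f\mu$ is $\Cal Z$-trivial by condition (1) of the definition, the composite $f\lambda = (f\mu)\alpha$ is $\Cal Z$-trivial by the observation above. Therefore, by the uniqueness clause in condition (2) of the definition applied to $\lambda$, there is exactly one morphism $\lambda'\colon Y\to X$ with $\lambda=\mu\lambda'$. Both $\alpha$ and $\beta$ serve as such a $\lambda'$, so $\alpha=\beta$.

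For (2), let $\mu\colon X\to A$ and $\lambda\colon Y\to A$ be two $\Cal Z$-prekernels of $f$. Since $f\lambda$ is $\Cal Z$-trivial, the universal property of $\mu$ produces a unique $\varphi\colon Y\to X$ with $\lambda=\mu\varphi$. By the symmetric argument, the universal property of $\lambda$ yields a unique $\psi\colon X\to Y$ with $\mu=\lambda\psi$. Combining, $\mu = \lambda\psi = \mu\varphi\psi$ and $\lambda = \mu\varphi = \lambda\psi\varphi$. Since $\mu$ and $\lambda$ are both monomorphisms by part (1), cancellation gives $\varphi\psi=\iota_X$ and $\psi\varphi=\iota_Y$, so $\varphi$ is the desired isomorphism. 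Its uniqueness among morphisms $Y\to X$ with $\lambda=\mu\varphi$ is exactly the uniqueness clause in condition (2) for $\mu$, finishing the argument.

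I expect no real obstacle: the statement is the usual ``kernels are unique up to unique isomorphism'' mantra, and the only substantive point is isolating the stability of $\Cal Z$-triviality under precomposition, which then lets the standard diagram chase run verbatim.
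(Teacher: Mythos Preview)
Your proof is correct and follows essentially the same route as the paper's: both arguments derive monicity from the uniqueness clause of the universal property applied to $\mu g=\mu h$, and both establish (2) by the standard back-and-forth construction of mutually inverse comparison maps. Your explicit remark on the stability of $\Cal Z$-triviality under precomposition is precisely what the paper invokes with the phrase ``a fortiori''.
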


\begin{proof}
(1) Let $T$ be an object of $\Cal C$ and $g,h\colon T\to X$ be morphisms in $\Cal C$ such that $\mu g=\mu h$. Since $\mu$ is a $\Cal Z$-prekernel of $f$, the morphism $f\mu$ is $\Cal Z$-trivial, so that, a fortiori, the morphism $f\mu g$ is also $\Cal Z$-trivial. By applying part (2) of the definition of $\Cal Z$-prekernel to $\mu g$, it follows that there exists a unique morphism $u\colon T\to X$ such that $\mu u=\mu g=\mu h$. From the uniqueness of $u$, we get that $u=g=h$. 

(2) By part (2) of the definition of $\Cal Z$-prekernel, there is a unique morphism $\lambda'\colon Y\to X$ such that $\lambda=\mu \lambda'$ and a unique morphism $\mu'\colon X\to Y$ such that $\mu=\lambda\mu'$. It follows that $\lambda=\lambda\mu'\lambda'$ and $\mu=\mu\lambda'\mu'$. By part (1) of this proposition, we obtain that $\mu'\lambda'=1_Y, \lambda'\mu'=1_X$. The conclusion is now clear. 
\end{proof}

Dually, a \emph{$\Cal Z$-precokernel} of $f$ is a morphism $p\colon A'\to X$ such that:
\begin{enumerate}
	\item $pf$ is a $\Cal Z$-trivial morphism.
	\item Whenever $\lambda\colon A'\to Y$ is a morphism and $\lambda f$ is $\Cal Z$-trivial, then there exists a unique morphism $\lambda_1\colon X\to Y$ with $\lambda=\lambda_1 p$.
\end{enumerate}

\smallskip

If $\Cal C^{\op}$ is the opposite category of $\Cal C$, the $\Cal Z$-precokernel of a morphism $f\colon A\to A'$ in $\Cal C$ is the $\Cal Z$-prekernel of the morphism $f\colon A'\to A$ in $\Cal C^{\op}$. Hence, from Proposition \ref{prekernel-properties'}, we get:

\begin{proposition}\label{precokernel-properties'}
Let $f\colon A\to A'$ be a morphism in a category $\mathcal C$ and let $p\colon A'\to X$ be a $\mathcal Z$-precokernel of $f$. Then the following properties hold. 
\begin{enumerate}
	\item $p$ is an epimorphism.
	\item If $q\colon A'\to Y$ is another $\mathcal Z$-precokernel of $f$, then there exists a unique isomorphism $\varphi\colon X\to Y$ satisfying $q=\varphi p$. 
\end{enumerate}
\end{proposition}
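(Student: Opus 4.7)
The plan is to deduce this proposition directly from Proposition \ref{prekernel-properties'} by the duality argument already sketched in the paragraph preceding the statement, without re-running the diagram chase.

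First I would observe that a $\mathcal{Z}$-precokernel of $f\colon A\to A'$ in $\mathcal{C}$ is, by unwinding the two defining conditions, exactly a $\mathcal{Z}$-prekernel of the same arrow $f\colon A'\to A$ considered in the opposite category $\mathcal{C}^{\op}$. (The class $\mathcal{Z}$ of objects is the same in $\mathcal{C}$ and $\mathcal{C}^{\op}$, and a morphism in $\mathcal{C}$ factors through an object of $\mathcal{Z}$ if and only if the corresponding morphism in $\mathcal{C}^{\op}$ does, so the notion of $\mathcal{Z}$-triviality is self-dual.) Once this identification is in place, both parts of the proposition follow by applying Proposition \ref{prekernel-properties'} in $\mathcal{C}^{\op}$ and translating the conclusion back.

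For (1), the $\mathcal{Z}$-prekernel $p\colon X\to A'$ in $\mathcal{C}^{\op}$ is a monomorphism in $\mathcal{C}^{\op}$ by Proposition \ref{prekernel-properties'}(1), and a monomorphism in $\mathcal{C}^{\op}$ is precisely an epimorphism in $\mathcal{C}$. For (2), if $q\colon A'\to Y$ is another $\mathcal{Z}$-precokernel of $f$ in $\mathcal{C}$, then in $\mathcal{C}^{\op}$ both $p$ and $q$ are $\mathcal{Z}$-prekernels of $f\colon A'\to A$. By Proposition \ref{prekernel-properties'}(2), there exists a unique isomorphism $\varphi\colon Y\to X$ in $\mathcal{C}^{\op}$ with $q=p\varphi$ (composition read in $\mathcal{C}^{\op}$), which translates back to a unique isomorphism $\varphi\colon X\to Y$ in $\mathcal{C}$ satisfying $q=\varphi p$.

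I do not expect a real obstacle here; the only thing that deserves a sentence of care is the verification that the two clauses defining $\mathcal{Z}$-precokernel are genuinely obtained from those defining $\mathcal{Z}$-prekernel by reversing every arrow (i.e.\ that $\mathcal{Z}$-triviality is a self-dual property of morphisms), since once that is noted the rest is purely formal. Alternatively, if one preferred to avoid invoking $\mathcal{C}^{\op}$, one could copy the proof of Proposition \ref{prekernel-properties'} verbatim with all arrows reversed: test $p$ as an epimorphism by taking $g,h\colon X\to T$ with $gp=hp$, observing that $gpf$ is $\mathcal{Z}$-trivial, and applying the uniqueness clause in the definition of precokernel to conclude $g=h$; and for (2), invoke the universal property of $p$ applied to $q$ and of $q$ applied to $p$ to obtain mutually inverse morphisms $\varphi\colon X\to Y$ and $\psi\colon Y\to X$, using part (1) to cancel $p$ on the right and obtain $\psi\varphi=1_X$, $\varphi\psi=1_Y$.
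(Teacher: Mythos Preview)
Your proposal is correct and is exactly the approach the paper takes: the paper simply remarks that a $\mathcal Z$-precokernel in $\mathcal C$ is a $\mathcal Z$-prekernel in $\mathcal C^{\op}$ and then invokes Proposition~\ref{prekernel-properties'}. Your additional observation that $\mathcal Z$-triviality is self-dual, and your optional direct rewrite of the argument with arrows reversed, are in fact more detailed than what the paper provides.
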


Let $f\colon A\to B$ and $g\colon B\to C$ be morphisms in $\Cal C$. We say that $$\xymatrix{
	A \ar[r]^f &  B \ar[r]^g &  C}$$ is a \emph{short $\Cal Z$-preexact sequence} in $\Cal C$ if $f$ is a $\Cal Z$-prekernel of $g$ and $g$ is a $\Cal Z$-precokernel of $f$.
	
Clearly, if $\xymatrix{
	A \ar[r]^f &  B \ar[r]^g &  C}$ is a short $\Cal Z$-preexact sequence in $\Cal C$, then\linebreak $\xymatrix{
	C \ar[r]^g &  B \ar[r]^f &  A}$ is a short $\Cal Z$-preexact sequence in $\Cal C^{\op}$.

\smallskip

It is now clear that if $\mathcal C:=\pre$ and $\mathcal Z$ is the class of all objects of type $(A,=)$ (where $A$ is any non-empty set and $=$ is the equality relation on $A$), then the $\mathcal Z$-prekernel and the $\mathcal Z$-precokernel of any morphism $f$ in $\pre$ exist and coincide with the prekernel (resp., cokernel) of $f$. In particular, a short $\mathcal Z$-preexact sequence in $\pre$ is a short preexact sequence, as in Definition \ref{short-preexact}.
  
  \begin{definition} {\rm Let $\Cal C$ be an arbitrary category. A {\em pretorsion theory} $(\Cal T,\Cal F)$ for $\Cal C$ consists of two non-empty classes $\Cal T,\Cal F$ of objects of $\Cal C$, closed under isomorphism, satisfying the following two conditions. Set $\Cal Z:=\Cal T\cap\Cal F$. 
  
  (1) For every object $B$ of $\Cal C$ there is a short $\Cal Z$-preexact sequence $$\xymatrix{
	A \ar[r]^f &  B \ar[r]^g &  C}$$ with $A\in\Cal T$ and $C\in\Cal F$.
	
	(2) $\Hom_{\Cal C}(T,F)=\Triv_{\Cal Z}(T, F)$  for every object $T\in\Cal T$, $F\in\Cal F$.}\end{definition}
    
  \begin{Lemma} Let $\Cal C$ be a category, $\Cal Z$ a non-empty class of objects of $\Cal C$ and $$\xymatrix{
	A \ar[r]^f &  B \ar[r]^g &  C}$$ a short $\Cal Z$-preexact sequence. Then:
	
	{\rm (a)} $f$ is $\Cal Z$-trivial if and only if $g$ is an isomorphism.
	
	{\rm (b)} $g$ is $\Cal Z$-trivial if and only if $f$ is an isomorphism.\end{Lemma}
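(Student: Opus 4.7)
\medskip

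My plan is to prove (a) directly and obtain (b) either by an entirely parallel argument or by passing to $\Cal C^{\op}$ (where, as observed just after Proposition~\ref{precokernel-properties'}, prekernels and precokernels interchange and short $\Cal Z$-preexact sequences are reversed). So the only real work is (a).

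For the forward direction of (a), assume $f$ is $\Cal Z$-trivial. The natural move is to feed the identity $1_B$ into the universal property of the $\Cal Z$-precokernel $g$: since $1_B\circ f=f$ is $\Cal Z$-trivial by assumption, there exists a unique $\lambda_1\colon C\to B$ with $\lambda_1 g=1_B$. To upgrade this one-sided inverse to a two-sided one, I would apply the universal property a second time, now with $\lambda:=g$ (legitimate because $gf$ is $\Cal Z$-trivial by definition of precokernel). This produces a unique $\mu\colon C\to C$ with $\mu g=g$; both $\mu=1_C$ and $\mu=g\lambda_1$ qualify (the latter because $(g\lambda_1)g=g(\lambda_1 g)=g$), so uniqueness forces $g\lambda_1=1_C$. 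Hence $g$ is an isomorphism.

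For the converse of (a), suppose $g$ is an isomorphism. Since $g$ is a $\Cal Z$-precokernel of $f$, the composite $gf$ is $\Cal Z$-trivial, i.e.\ $gf=hk$ with $k\colon A\to Z$, $h\colon Z\to C$ and $Z\in\Cal Z$. Then $f=g^{-1}(gf)=(g^{-1}h)k$ factors through the same $Z$, so $f$ is $\Cal Z$-trivial.

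Part (b) is dual in exactly the expected way: one applies the universal property of the $\Cal Z$-prekernel $f$ to $\lambda:=1_B$ (using that $g\cdot 1_B=g$ is $\Cal Z$-trivial by hypothesis) to get a right inverse $\lambda'\colon B\to A$ of $f$, then re-invokes uniqueness with $\lambda:=f$ to upgrade it to a two-sided inverse; and the converse uses $g=(gf)f^{-1}$ to factor $g$ through the same $\Cal Z$-object through which $gf$ factors. I do not anticipate a real obstacle here: the whole argument is a standard ``universal property applied to the identity, then applied to the canonical map, then uniqueness'' routine, and the only place to be careful is to verify each time that the map we plug in genuinely satisfies the required $\Cal Z$-triviality hypothesis.
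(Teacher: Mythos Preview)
Your argument is correct and follows essentially the same route as the paper: the converse of (a) and the reduction of (b) to duality are identical, and for the forward direction of (a) the paper observes that $1_B$ is itself a $\Cal Z$-precokernel of $f$ and then invokes Proposition~\ref{precokernel-properties'}(2), whereas you unpack this by applying the universal property of $g$ twice. The two presentations differ only in packaging, not in substance.
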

	
	\begin{proof} (a) Suppose $f$ $\Cal Z$-trivial. By definition, the identity morphism $1_B\colon B\to B$ is clearly a $\Cal Z$-precokernel of $f$. By the uniquess up to isomorphism of $\Cal Z$-precokernels (see Proposition \ref{precokernel-properties'}), there is a unique isomorphism $h\colon C\to B$ such that $hg=1_B$. Hence $g$ is an isomorphism.
	
	Conversely, assume that $g$ is an isomorphism. Since $gf$ is $\Cal Z$-trivial, i.e., $gf$ factors though an object in $\Cal Z$, the same holds for $f$, that is, $f$ is $\Cal Z$-trivial.
	
	(b) follows from (a) passing to the opposite category $\Cal C^{\op}$.\end{proof}
	
	\begin{proposition} Let $(\Cal T,\Cal F)$ be a pretorsion theory in a category $\Cal C$, let $\Cal Z=\Cal T\cap\Cal F$, and let $X$ be any object in $\Cal C$. \begin{enumerate}
	\item If $\Hom_{\Cal C}(X,F)=\Triv_{\Cal Z}(X,F)$ for every $F\in\mathcal F$, then $X\in\Cal T $.
	\item If $\Hom_{\Cal C}(T,X)=\Triv_{\Cal Z}(T,X)$ for every $T\in\Cal T $, then $X\in\mathcal F$. 
\end{enumerate}\end{proposition}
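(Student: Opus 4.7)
The plan is to use axiom (1) of a pretorsion theory to produce, for the object $X$, a short $\Cal Z$-preexact sequence with one end in $\Cal T$ and the other in $\Cal F$, and then invoke the lemma immediately preceding the proposition to force one of the two outer arrows to be an isomorphism. Since $\Cal T$ and $\Cal F$ are closed under isomorphism, this will place $X$ in the desired class.

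More precisely, for part (1), I would apply axiom (1) of the definition of pretorsion theory to $X$ to obtain a short $\Cal Z$-preexact sequence
$$\xymatrix{A \ar[r]^f & X \ar[r]^g & C}$$
with $A\in\Cal T$ and $C\in\Cal F$. Now the hypothesis $\Hom_{\Cal C}(X,F)=\Triv_{\Cal Z}(X,F)$ applied to $F:=C\in\Cal F$ says that the morphism $g\colon X\to C$ is $\Cal Z$-trivial. By part (b) of the preceding lemma, this forces $f$ to be an isomorphism. Since $A\in\Cal T$ and $\Cal T$ is closed under isomorphism, we conclude $X\in\Cal T$, as required.

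Part (2) is strictly dual. Again take the sequence $\xymatrix{A \ar[r]^f & X \ar[r]^g & C}$ provided by axiom (1), with $A\in\Cal T$, $C\in\Cal F$. The hypothesis applied to $T:=A\in\Cal T$ yields that $f\colon A\to X$ is $\Cal Z$-trivial, so by part (a) of the preceding lemma $g$ is an isomorphism, and closure of $\Cal F$ under isomorphism gives $X\in\Cal F$.

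There is essentially no obstacle here: the whole proof is a direct application of axiom (1) together with the lemma characterizing when the outer maps of a short $\Cal Z$-preexact sequence are isomorphisms. The only small thing to notice is that axiom (2) of the pretorsion theory is not needed in this argument; it is only axiom (1) and the closure of $\Cal T, \Cal F$ under isomorphism that do the work. In effect, this proposition says that the classes $\Cal T$ and $\Cal F$ of a pretorsion theory are each determined by a $\Hom$-orthogonality condition against the other class, modulo the trivial morphisms factoring through $\Cal Z$.
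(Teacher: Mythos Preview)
Your proof is correct and matches the paper's own argument essentially verbatim: take the short $\Cal Z$-preexact sequence from axiom (1), observe that the hypothesis makes one of the outer maps $\Cal Z$-trivial, apply the preceding lemma to conclude the other outer map is an isomorphism, and use closure under isomorphism. Your additional remark that axiom (2) plays no role is accurate and worth noting.
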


\begin{proof} By definition, there is a short $\Cal Z$-preexact sequence $\xymatrix{
	A \ar[r]^f &  X \ar[r]^g &  C}$ with $A\in\Cal T$ and $C\in\Cal F$. If $\Hom_{\Cal C}(X,F)=\Triv_{\Cal Z}(X,F)$ for every $F\in\mathcal F$, then $g$ is $\Cal Z$-trivial, so $f$ is an isomorphism by the previous lemma. As $\Cal T$ is closed under isomorphism, we get that $X\in\Cal T$.  This proves (1). Similarly for (2).\end{proof}
  
  Notice that if $(\Cal T,\Cal F)$ is a pretorsion theory in $\Cal C$, then $(\Cal F,\Cal T)$ is a pretorsion theory in $\Cal C^{\op}$. Clearly, what we have proved in the previous sections shows that $(${\bf Equiv}$,${\bf ParOrd}$)$ is a pretorsion theory in $\pre$.

\begin{remark}{\rm 
There is a notion of torsion theories  for pointed 
categories given by Janelidze and Tholen in \cite{JT}. Their torsion theories 
are a special case of our notion of pretorsion theory. Namely, 
let $\Cal C$ be a pointed category and let $\Cal Z$ be the class of all 
zero objects on $\Cal C$. Then, for any pair of fixed objects $A,B$ of $\Cal C$, the family of all $\Cal Z$-trival morphisms consists  exactly of all zero morphisms. It immediately follows that, if $f\colon A\to B$, $k\colon X\to A$ are morphisms in $\Cal C$, then $k$ is a kernel of $f$ if and only if $k$ is a $\Cal Z$-prekernel of $f$ (where $\Cal Z$ is as before), and similarly for cokernels.  Thus short exact sequences, as defined in  \cite{JT}, are exactly our short $\Cal Z$-preexact sequences. In \cite{JT}, a torsion theory in $\Cal C$ is a pair $(\Cal T,\Cal F)$ of full subcategories of $\Cal C$ that are closed under isomorphism, satisfying the following axioms: 
\begin{enumerate}
	\item $\Hom(T,F)=0$, for every $T\in \Cal T,F\in \Cal F$.
	\item For every object $X$ in $\Cal C$, there are objects $T(X)\in \Cal T, F(X)\in \Cal F$ and a short exact sequence $T(X)\to X\to F(X)$. 
\end{enumerate} 
The fact that every torsion theory $(\Cal T,\Cal F)$ is an example of pretorsion theory will follow if we prove that $\Cal T\cap \Cal F$ is exactly the class $\Cal Z$ of all zero objects of $\Cal C$. Clearly, every zero object is in $\Cal T\cap \Cal F$. Conversely, let $H$ be any object in $\Cal T\cap \Cal F$. By axiom (1), the identity $1_H\colon H\to H$ is the zero morphism, i.e., factors through a zero object. It immediately follows that $H$ is (isomorphic to) a zero object.}\end{remark}

\end{document}